\newtheorem{thm}{Theorem}
\newtheorem{lem}[thm]{Lemma}
\newtheorem{rk}[thm]{Remark}
\newtheorem{cor}[thm]{Corollary}
\newtheorem{prop}[thm]{Proposition}
\numberwithin{thm}{section}
\numberwithin{figure}{section}
\numberwithin{equation}{section}
\begin{document}
\newcommand*{\TitleFont}{%
\fontsize{19}{20}%
\selectfont}
\title{\TitleFont{Field-measure correspondence in Liouville quantum gravity almost
surely commutes with all conformal maps simultaneously}}

\author{Scott Sheffield\thanks{\noindent Department of Mathematics, Massachusetts Institute of Technology, Cambridge, MA, sheffield@math.mit.edu}\quad
Menglu Wang\thanks{\noindent Department of Mathematics, Massachusetts Institute of Technology, Cambridge, MA, mengluw@math.mit.edu}} 
\date{}
\maketitle

\abstract{In Liouville quantum gravity (or $2d$-Gaussian
multiplicative chaos) one seeks to define a measure $\mu^h = e^{\gamma
h(z)} dz$ where $h$ is an instance of the Gaussian free field on a
planar domain $D$. Since $h$ is a distribution, not a function, one
needs a regularization procedure to make this precise: for example,
one may let $h_\epsilon(z)$ be the average value of $h$ on the circle of radius $\epsilon$ centered at $z$ (or an analogous average defined using a bump function supported inside that circle) and then
write $\mu^h = \lim_{\epsilon \to 0} \epsilon^{\frac{\gamma^2}{2}} e^{\gamma
h_\epsilon(z)} dz$.

If $\phi: \tilde D \to D$ is a conformal map, one can write $\tilde h
= h \circ \phi + Q \log |\phi'|$, where $Q = 2/\gamma + \gamma/2$. The
measure $\mu^{\tilde h}$ on $\tilde D$ is then a.s.\ equivalent to the
pullback via $\phi^{-1}$ of the measure $\mu^h$ on $D$. Interestingly,
although this a.s.\ holds for each \textit{given} $\phi$, nobody has
ever proved that it a.s.\ holds \textit
{simultaneously} for all possible $\phi$. We will prove that this is indeed the case. This is
conceptually important because one frequently defines a \textit{quantum
surface} to be an equivalence class of pairs $(D, h)$ (where pairs
such as the $(D,h)$ and $(\tilde D, \tilde h)$ above are considered
equivalent) and it is useful to know that the set of pairs $(D,\mu^{h})$ obtained from the set of pairs $(D,h)$ in an equivalence class is itself an equivalence class with respect to the usual measure pullback relation.}

\section{Introduction}\label{section::definition}
In dimension $d$, a Gaussian multiplicative chaos is a random measure on a domain $D\subset\mathbb{R}^{d}$ that can be formally written as
\begin{equation}\label{equation::chaos}
\mu(dx)=e^{\gamma h(x)-\frac{\gamma^{2}}{2}\mathbb{E}h^{2}(x)}\sigma(dx),
\end{equation}
where $\gamma>0$, $h$ is a centered Gaussian field and $\sigma$ is a Radon measure on $D$. We assume that $h$ possesses a covariance kernel of the form
\begin{equation*}
\mathbb{E}(h(x)h(y))=-\log(|x-y|\wedge 1)+g(x,y),
\end{equation*}
where $g$ is a continuous bounded function on $D\times D$. Measures of this form first appeared in H{\o}egh-Krohn's work \cite{MR0292433} in the setting where $h$ is the $2d$-massive free field and $\sigma$ is the Lebesgue measure. H{\o}egh-Krohn showed that the measure $\mu$ exists and is non-trivial for $\gamma\in[0,\sqrt{2})$. In \cite{MR829798} Kahane introduced the term \textit{Gaussian multiplicative chaos} and further developed the theory in order to obtain a continuous counterpart of the multiplicative cascades proposed by Mandelbrot in \cite{FLM:385758}. In particular, Kahane (apparently unaware of the work in \cite{MR0292433}) extended the construction in \cite{MR0292433} from $\gamma\in[0,\sqrt{2})$ to $\gamma\in[0,2)$ and expanded the theory in many ways. This kind of random measure has applications in a variety of fields like Schramm-Loewner Evolution \cite{duplantier2011schramm, sheffield2010conformal}, Liouville quantum gravity \cite{DuplantierSheffieldLQGKPZ, miller2013quantum} and $3d$-turbulence \cite{mandelbrot1972possible, fyodorov2010freezing}. For a thorough discussion on Gaussian multiplicative chaos and the relatively recent developments, we refer the reader to the survey \cite{MR3274356} and the references therein.

In this article, we will focus on the Liouville quantum gravity measure, which is a special case of the Gaussian multiplicative chaos where $d=2$ and $h$ is the Gaussian free field (see Remark \ref{remark::case} for the choice of $\sigma(dx)$). Note that the Gaussian free field is not defined pointwise, but is almost surely a distribution. One needs a regularization procedure to make (\ref{equation::chaos}) precise.  

The first approach is to apply Kahane's theory (\cite{MR829798}). Roughly speaking, since the covariance kernel of the Gaussian free field is $\sigma$-positive, there is a sequence of continuous Gaussian processes $(X_{n})_{n}$ with independent increments $X_{n+1}-X_{n}$ such that the covariance kernels of $X_{n}$ converge to the covariance kernel of $h$. Basic martingale theory implies that it is almost surely the case that the random measures 
\[\mu_{n}(dz):=e^{\gamma X_{n}(z)-\frac{\gamma^{2}}{2}\mathbb{E}X^{2}_{n}(z)}\sigma(dz)\]
converge weakly to a random measure $\mu$. In \cite{MR829798}, a uniqueness result was also proved: the law of the limiting measure $\mu$ does not depend on the sequence $(X_{n})_{n}$. Moreover, Kahane showed that the limiting measure $\mu$ is non-degenerate if and only if $\gamma<2$. Note that Kahane's theory only ensures equality in \textit{law}, i.e., it does not show that there is almost surely a unique way to produce a measure $\mu^{h}$ from a given instance $h$ of the GFF.

The second approach is to apply convolution techniques. Given an instance $h$ of the Gaussian free field, we consider the convolution of $h$ with a mollifier $f$. Suppose that $f:\mathbb{R}^{2}\rightarrow\mathbb{R}_{\ge 0}$ is a radially symmetric bump function compactly supported on $B_{1}(0)$ with \[\int_{B_{1}(0)}f(z)dz=1,\]
where $dz$ is the Lebesgue measure on $\mathbb{R}^{2}$.

For $0<\epsilon<1$, let
\[f_{\epsilon}(z):=\frac{1}{\epsilon^{2}}f(\frac{z}{\epsilon}),\]
and
\[h*f_{\epsilon}(z):=(h, f_{\epsilon}(z-\cdot)).\]
Consider the family of random measures 
\begin{equation}\label{equation::approximate}
\tilde{\mu}_{\epsilon}:=e^{\gamma h*f_{\epsilon}(z)-\frac{\gamma^{2}}{2}\mathbb{E}|h*f_{\epsilon}(z)|^{2}}\sigma(dz).
\end{equation}
It was established in \cite{MR2642887} that $\tilde{\mu}_{\epsilon}$ converge in \textit{law} in the space of Radon measures (equipped with the topology of weak convergence) towards a random measure as $\epsilon\rightarrow 0$. Note that in this case the approximating Gaussian fields $h*f_{\epsilon}(z)$ and the corresponding measures $\tilde{\mu}_{\epsilon}$ are all a.s.\ determined by $h$. The convergence in probability and convergence in $L^{p}$ of the random measures as $\epsilon\rightarrow 0$ were studied in \cite{MR3475456, junnila2015uniqueness, berestycki2015elementary}. It is known that (see e.g., \cite[Theorem 26]{MR3475456}) the random measures $\tilde{\mu}_{\epsilon}$ converge in \textit{probability} to a limiting measure $\mu$ and the random measure $\mu$ does not depend on the choice of the mollifier $f$. 

In \cite{DuplantierSheffieldLQGKPZ}, the authors set $f$ to be the uniform measure on the unit circle instead of a smooth function and proved the \textit{almost} \textit{sure} convergence of (\ref{equation::approximate}). To be concrete, let $D$ be a bounded simply connected domain in $\mathbb{R}^{2}$, and $h$ an instance of the zero boundary Gaussian free field (GFF) on $D$. Denote by $h_{\epsilon}(z)$ the average value of $h$ on the circle of radius $\epsilon$ centered at $z$ (see Section \ref{section::pre} for a quick overview). 

Fix $\gamma\in[0,2)$, and write 
\begin{equation}\label{equation::def}
\bar{h}_{\epsilon}(z):=\gamma h_{\epsilon}(z)+\frac{\gamma^{2}}{2}\log\epsilon.
\end{equation}
The Liouville quantum gravity measure on $D$ is the weak limit as $\epsilon\rightarrow 0$ of the measures $\mu_{\epsilon}:=e^{\bar{h}_{\epsilon}(z)}dz$. In \cite{DuplantierSheffieldLQGKPZ}, the authors showed that the limiting measure a.s.\ exists as $\epsilon\rightarrow 0$ along negative powers of $2$, which we denote by $\mu=\mu^{h}=e^{\gamma h(z)}dz$. There is in fact a significant technical difficulty in extending
from a.s.\ convergence along negative powers of $2$ to a.s.\ convergence when
$\epsilon$ is not restricted to negative powers of $2$. Overcoming that
difficulty requires a much deeper understanding of the continuity
properties of the map $z\mapsto h_\epsilon(z)$, and this is the first
part of what will be accomplished in the current paper:

\begin{thm}\label{theorem::measureconvergence}
Fix $\gamma\in[0,2)$ and define $D, h,\mu_{\epsilon},\mu$ as above. Then it is almost surely the case that as $\epsilon\rightarrow 0$, the measures $\mu_{\epsilon}$ converge weakly in $D$ to $\mu$.
\end{thm}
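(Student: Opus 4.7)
The plan is to combine the Duplantier--Sheffield almost-sure convergence $\mu_{2^{-n}}\to\mu$ with a uniform continuity estimate on the circle-average process $(z,\epsilon)\mapsto h_\epsilon(z)$, so as to control how $\mu_\epsilon$ fluctuates as $\epsilon$ varies within a single dyadic annulus $[2^{-(n+1)},2^{-n}]$. Weak convergence of measures being tested against a countable dense family of nonnegative $\phi\in C_c(D)$, it suffices (after intersecting a.s.\ events) to fix such a $\phi$ with compact support $K\subset D$ and to prove $\mu_\epsilon(\phi)\to\mu(\phi)$ almost surely along the continuous parameter $\epsilon\to 0$; the dyadic result is already known.

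The central task is to bound the oscillation
\[
V_n \;:=\; \sup_{\epsilon\in[2^{-(n+1)},2^{-n}]}\bigl|\mu_\epsilon(\phi)-\mu_{2^{-n}}(\phi)\bigr|.
\]
A useful rewriting, valid for $\epsilon\le 2^{-n}$ small, is
\[
\mu_\epsilon(\phi)\;=\;\int_K \phi(z)\,R^z_\epsilon\,d\mu_{2^{-n}}(z),\qquad
R^z_\epsilon\;:=\;(\epsilon\,2^n)^{\gamma^2/2}\,e^{\gamma(h_\epsilon(z)-h_{2^{-n}}(z))},
\]
since both sides equal $\int\phi(z)\,\epsilon^{\gamma^2/2}\,e^{\gamma h_\epsilon(z)}\,dz$. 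For each fixed $z$ with $B_{2^{-n}}(z)\subset D$, the process $\epsilon\mapsto R^z_\epsilon$ is a non-negative martingale in $-\log\epsilon$ with respect to the single-point filtration $\sigma(h|_{D\setminus B_\epsilon(z)})$, starting from $R^z_{2^{-n}}=1$, yielding $L^p$-bounded pointwise fluctuations by Doob. I would then establish a Kolmogorov-type continuity estimate for $\epsilon\mapsto\mu_\epsilon(\phi)$ on $[2^{-(n+1)},2^{-n}]$ by controlling the moments $\mathbb{E}[|\mu_\epsilon(\phi)-\mu_{\epsilon'}(\phi)|^q]$ for $\epsilon,\epsilon'$ close, combining a refinement of the Hu--Miller--Peres modulus of continuity for $h_\epsilon(z)$ (uniform in $z\in K$) with the $L^q$ moment bounds for $\mu_{2^{-n}}$ from Kahane's theory (valid for $q<4/\gamma^2$). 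The target is $\mathbb{E}[V_n^q]\le c_n$ with $\sum_n c_n<\infty$, after which Borel--Cantelli gives $V_n\to 0$ a.s.; combining with the dyadic convergence then yields $\mu_\epsilon(\phi)\to\mu(\phi)$ a.s.\ along every sequence $\epsilon\to 0$, and a final countable intersection over the test-function family completes the proof.

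The main obstacle is that the Brownian-type increment $h_\epsilon(z)-h_{2^{-n}}(z)$ has variance of order $\log 2$ across each dyadic annulus, so the ratio $R^z_\epsilon$ remains only $O(1)$ rather than $1+o(1)$: a crude pointwise bound gives merely $V_n=O(\mu_{2^{-n}}(\phi))$, which does not vanish. To drive $V_n$ to zero one must genuinely exploit the spatial averaging performed by integrating against $\mu_{2^{-n}}$ (whose mass concentrates on the $\gamma$-thick points), and simultaneously produce a continuity estimate for $(z,\epsilon)\mapsto h_\epsilon(z)$ that is uniform in $z\in K$ across all sufficiently small scales at once. This is precisely the ``much deeper understanding of the continuity properties'' of the circle-average process advertised in the introduction, and I expect the bulk of the technical work to live here.
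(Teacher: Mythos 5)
The route you sketch is genuinely different from the one the paper takes, and the obstacle you single out at the end is the right one, but your plan for overcoming it does not close as stated. The conditional-independence computation (the $A_k^y,B_k^y$ device of Duplantier--Sheffield, reused verbatim in the paper's Lemma~\ref{lemma::Nconvergence}) gives, for $\epsilon,\epsilon'$ in the same dyadic annulus at scale $n$,
\[
\mathbb{E}\bigl|\mu_\epsilon(\phi)-\mu_{\epsilon'}(\phi)\bigr|^2\asymp\bigl(e^{\gamma^2|\log(\epsilon/\epsilon')|}-1\bigr)\cdot 2^{(\gamma^2-2)n},
\]
which is \emph{linear} in $|\log\epsilon-\log\epsilon'|$ for nearby scales. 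That is exactly the critical exponent for Kolmogorov continuity and for $L^2$ chaining: with a budget split $\sum_j u_j<\infty$ one is forced into $\sum_j u_j^{-2}=\infty$, so the union bound over scales diverges. To get a usable H\"older exponent one must take $q>2$, but the $q$-th moment bound $q<4/\gamma^2$ you invoke fails to deliver $q\ge 2$ as soon as $\gamma\ge\sqrt 2$, and even for $\gamma<\sqrt 2$ the decisive truncation to thin points (the sets $\tilde S_k^y$ in Lemma~\ref{lemma::Nconvergence}) is needed for $\gamma\in[\sqrt{2},2)$; your proposal does not address this regime. The Hu--Miller--Peres pointwise modulus for $h_\epsilon(z)$ cannot save the argument on its own, for precisely the reason you state in your final paragraph: the Brownian increment across a full dyadic annulus is $O(1)$, not $o(1)$.

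What the paper does instead bypasses chaining entirely. Lemma~\ref{lemma::Nconvergence} first upgrades the DS convergence from powers of $2$ to powers of $2^{1/N}$ for every integer $N$, using the same second-moment-plus-truncation argument. Then, for each $N$, one forms from the sup and inf of $e^{\bar h_\epsilon(z)}$ over the thin annulus $\epsilon\in[2^{-(k+1)/N},2^{-k/N}]$ two auxiliary random measures $\overline{\mu}_{k,N}$ and $\underline{\mu}_{k,N}$, premultiplied by the explicit constants $\overline{C}(N)$ and $\underline{C}(N)$ computed from the running sup and inf of a drifted Brownian motion on $[0,\log 2/N]$ (using Proposition~\ref{prop::circleaverage}, part 4). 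By construction $\overline{C}(N)\sup_\epsilon e^{\bar h_\epsilon(z)}$ has conditional mean $e^{\bar h_{2^{-k/N}}(z)}$ given $h_{2^{-k/N}}(z)$, so the identical tower-property second-moment estimate yields $\overline{\mu}_{k,N}\to\mu$ and $\underline{\mu}_{k,N}\to\mu$ a.s.\ for each fixed~$N$. Since for every $\epsilon$ in the thin annulus one has the deterministic pointwise sandwich $\underline{C}(N)^{-1}\underline{\mu}_{k,N}\le\mu_\epsilon\le\overline{C}(N)^{-1}\overline{\mu}_{k,N}$, and since $\overline{C}(N),\underline{C}(N)\to 1$ as $N\to\infty$, letting $k\to\infty$ and then $N\to\infty$ gives the theorem. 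In other words, the fluctuation you are trying to kill by a modulus-of-continuity estimate is instead absorbed into explicit bias constants $\overline{C}(N)^{-1}-1$ and $1-\underline{C}(N)^{-1}$, which are sent to zero by refining the grid; no uniform-in-$z$ continuity of $h_\epsilon(z)$ and no chaining is invoked.
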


Next we consider the convolution of the GFF with a mollifier $f$ and the associated random measures $\tilde{\mu}_{\epsilon}$ as defined in (\ref{equation::approximate}). We will show that the same result as in Theorem \ref{theorem::measureconvergence} holds for $\tilde{\mu}_{\epsilon}$ as well:
\begin{thm}\label{prop::convergence}
Fix $\gamma\in[0,2)$ and define $D, h, \mu$, $\tilde{\mu}_{\epsilon}$ as above. Then it is almost surely the case that as $\epsilon\rightarrow 0$, the measures $\tilde{\mu}_{\epsilon}$ converge weakly in $D$ to $\mu$. The result remains true if we replace $h$ with a GFF with non-zero boundary conditions. 
\end{thm}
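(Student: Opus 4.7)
My plan is to reduce Theorem \ref{prop::convergence} to Theorem \ref{theorem::measureconvergence} by showing that, when built from the same instance $h$ of the GFF, the mollifier-based measures $\tilde{\mu}_\epsilon$ are almost surely close to the circle-average measures $\mu_\epsilon$ as $\epsilon \to 0$. The first step is to dispose of the boundary data: write $h = h_0 + u$ with $h_0$ a zero-boundary GFF and $u$ the harmonic extension of the given boundary values. Since $u$ is smooth on compact subsets of $D$, the deterministic factor $e^{\gamma\, u * f_\epsilon(z)}$ converges uniformly to $e^{\gamma u(z)}$, so it suffices to handle the zero-boundary case. Next, I reduce to radially symmetric $f$ by replacing $f$ by its rotational average and controlling the asymmetric remainder via Gaussian concentration. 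For radially symmetric $f(x) = \tilde f(|x|)$, Fubini in polar coordinates yields the key representation
\[h * f_\epsilon(z) = \int_0^\epsilon h_r(z)\, \rho_\epsilon(r)\, dr, \qquad \rho_\epsilon(r) := \frac{2\pi r}{\epsilon^2}\, \tilde f(r/\epsilon),\]
which exhibits the mollified field as a deterministic weighted average of the circle averages $h_r(z)$ for $r \in (0,\epsilon]$.

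Next, I analyze $R_\epsilon(z) := h * f_\epsilon(z) - h_\epsilon(z)$, which equals $(h, \psi_\epsilon^z)$ for the signed, zero-mass measure $\psi_\epsilon^z := f_\epsilon(\cdot - z)\, dw - \nu_\epsilon^z$, where $\nu_\epsilon^z$ is the uniform probability measure on $\partial B_\epsilon(z)$. A direct covariance computation, exploiting that $\psi_\epsilon^z$ has zero total mass and annihilates harmonic functions (by the mean value property together with the radial symmetry of $f$), shows that the smooth part of the GFF covariance contributes nothing. A scaling argument then produces the clean identity $\operatorname{Var} R_\epsilon(z) = V_0(f)$, a constant independent of $\epsilon$ and $z$, together with the decorrelation bound that $\mathbb{E}[R_\epsilon(z) R_{\epsilon'}(z')] \to 0$ once $|z-z'|$ is much larger than $\max(\epsilon, \epsilon')$. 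Moreover, the deterministic mismatch $\frac{\gamma^2}{2}(\mathbb{E}|h*f_\epsilon(z)|^2 + \log \epsilon)$ converges uniformly on compact subsets of $D$ to an explicit continuous function of $z$, which can be absorbed into the reference measure $\sigma$. After this absorption, $\tilde{\mu}_\epsilon(dz)$ equals $\mu_\epsilon(dz)$ times the exponential martingale factor $e^{\gamma R_\epsilon(z) - \gamma^2 V_0/2}$ up to a deterministic $o(1)$ error uniform in $z$.

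With things set up this way, for any smooth compactly supported test function $\varphi$ on $D$, Theorem \ref{theorem::measureconvergence} gives $\mu_\epsilon(\varphi) \to \mu(\varphi)$ almost surely, so the task is to show that $\int \varphi(z)\, e^{\gamma R_\epsilon(z) - \gamma^2 V_0/2}\, d\mu_\epsilon(z)$ converges a.s.\ to the same limit. Since in-probability convergence of $\tilde{\mu}_\epsilon(\varphi)$ is already established in \cite{MR3475456, junnila2015uniqueness, berestycki2015elementary}, only the upgrade to a.s.\ convergence is needed. For this I would pass to a geometric subsequence $\epsilon_n = 2^{-n}$, bound the variance of $\tilde{\mu}_{\epsilon_n}(\varphi) - \mu_{\epsilon_n}(\varphi)$ using the decorrelation of $R_{\epsilon_n}$ at macroscopic scales (conditionally on the $\epsilon_n$-scale harmonic data of $h$, the correction $R_{\epsilon_n}$ has local range $O(\epsilon_n)$ and therefore self-averages against the smooth weight $\varphi$), obtain a summable variance estimate, and conclude by Borel--Cantelli. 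Intermediate values $\epsilon \in (\epsilon_{n+1}, \epsilon_n)$ would be handled by a uniform-in-$\epsilon$ modulus-of-continuity estimate for the jointly Gaussian field $(\epsilon, z) \mapsto h * f_\epsilon(z)$, parallel to the continuity estimates developed for the circle averages in the proof of Theorem \ref{theorem::measureconvergence}.

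The principal obstacle will be the variance control at the Borel--Cantelli step. Since $R_\epsilon$ has order-one fluctuations (variance $V_0$ not decaying with $\epsilon$), the multiplicative correction $e^{\gamma R_\epsilon}$ is not small and interacts nontrivially with the singular multifractal measure $\mu_\epsilon$; one must carefully show that, conditionally on the $\epsilon$-scale harmonic component of $h$, the exponential $e^{\gamma R_\epsilon(z)}$ is close to a deterministic constant in expectation with short-range fluctuations, so that integration against $\varphi$ and $\mu_\epsilon$ concentrates. Turning this heuristic into a clean quantitative $L^2$ bound, uniform enough in $\varphi$ to deduce weak convergence of measures from convergence against a countable dense family of test functions, is the technical heart of the argument and is where the subcritical moment estimates for $\mu$ available for $\gamma \in [0,2)$ enter crucially.
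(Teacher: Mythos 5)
Your reduction of the theorem to a comparison between $\tilde{\mu}_\epsilon$ and the circle-average measure $\mu_\epsilon$ is the right decomposition, and your central observation --- via Fubini in polar coordinates, $h*f_\epsilon(z)$ is a deterministic average of circle averages $h_r(z)$, $r\le\epsilon$, so the remainder $R_\epsilon(z) := h*f_\epsilon(z) - h_\epsilon(z)$ has a variance that is independent of $\epsilon$ and $z$ --- is exactly the paper's starting point. (The reduction to radially symmetric $f$ is unnecessary: the theorem assumes it from the outset.) However, two aspects of your plan need strengthening.

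First, the paper does not need to show that $e^{\gamma R_\epsilon(z)}$ is ``close to a deterministic constant in expectation''; it uses that, given $h_\epsilon(z)$, one has \emph{exactly}
\[
\mathbb{E}\bigl(e^{\tilde h_\epsilon(z)}\,\big|\, h_\epsilon(z)\bigr) \;=\; e^{\bar h_\epsilon(z)}
\qquad\text{and}\qquad
\mathbb{E}\bigl(\,\bigl|e^{\tilde h_\epsilon(z)} - e^{\bar h_\epsilon(z)}\bigr|^2\,\big|\, h_\epsilon(z)\bigr) \;=\; \tilde C\, e^{2\bar h_\epsilon(z)},
\]
with $\tilde C = e^{-C\gamma^2}-1$. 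This is an exact martingale-type identity coming from the Markov property of the GFF: conditioned on $h_\epsilon(z)$, the nested circle averages $h_r(z)$, $r<\epsilon$, evolve as an independent Brownian motion, so $R_\epsilon$ is conditionally centered Gaussian with the explicit variance $-C$. This is what makes the ``cross terms vanish'' and reduces the $L^2$ estimate of $\tilde\mu_{2^{-k}}(S)-\mu_{2^{-k}}(S)$ to the same $2^{(\gamma^2-2)k}$ bound as in Lemma~\ref{lemma::Nconvergence}, settling $\gamma\in[0,\sqrt2)$ along geometric scales; for $\gamma\in[\sqrt2,2)$ you cannot get by with an appeal to moment estimates --- you need the same truncation on $\tilde S_k^y$ used in that lemma, since the raw second moment is infinite.

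Second, and more seriously, your plan to interpolate over $\epsilon\in(\epsilon_{n+1},\epsilon_n)$ ``by a uniform-in-$\epsilon$ modulus-of-continuity estimate for $(\epsilon,z)\mapsto h*f_\epsilon(z)$'' would fail. The pointwise increments of the Gaussian field in $\epsilon$ are indeed small (of order $N^{-1/2}$ if you refine to $\epsilon_n = 2^{-n/N}$), but the supremum of these increments over the $\sim\epsilon_n^{-2}$ effective lattice points carries a $\sqrt{\log(1/\epsilon_n)}$ factor and does not tend to zero; worse, $\mu_\epsilon$ concentrates on $\gamma$-thick points where $e^{\gamma h*f_\epsilon(z)}\approx\epsilon^{-\gamma^2}$, so $O(1)$ fluctuations of the field there translate into $O(1)$ multiplicative errors in the integrand, which the weak-$*$ topology does not forgive. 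The paper's mechanism (both for Theorem~\ref{theorem::measureconvergence} and for this theorem --- it is not a continuity estimate) is instead a sandwich: one considers the renormalized measures
\[
\tilde C(N)\sup_{\epsilon\in[2^{-(k+1)/N},2^{-k/N}]} e^{\tilde h_\epsilon(z)}\,dz \quad\text{and}\quad
\uwave C(N)\inf_{\epsilon\in[2^{-(k+1)/N},2^{-k/N}]} e^{\tilde h_\epsilon(z)}\,dz,
\]
where $\tilde C(N)$ and $\uwave C(N)$ are explicit ratios of Brownian-motion expectations chosen so that each has conditional mean $e^{\bar h_{2^{-k/N}}(z)}$ given $h_{2^{-k/N}}(z)$. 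These sandwich $\tilde\mu_\epsilon$ for all $\epsilon$ in the annulus, converge weakly to $\mu$ as $k\to\infty$ by the same $L^2$/Borel--Cantelli scheme, and the constants tend to $1$ as $N\to\infty$ by dominated convergence. The key is that the $\sup$ over scales is taken \emph{inside} the exponential and compensated exactly in conditional expectation --- this is the step a modulus-of-continuity argument cannot replace.
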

\begin{rk}
Note that $h$ a.s.\ determines the measure $\mu^{h}$. It was shown in \cite{berestycki2014equivalence} that the converse is true: $h$ is measurably determined by $\mu^{h}$. Therefore, we obtain a field-measure correspondence $\rho$ given by
\begin{equation}\label{equation::map}
h\stackrel{\rho}{\mapsto}\mu^{h}:=\lim_{\epsilon\rightarrow 0}e^{\gamma h*f_{\epsilon}(z)-\frac{\gamma^{2}}{2}\mathbb{E}|h*f_{\epsilon}(z)|^{2}}\sigma(dz).
\end{equation}
\end{rk}
Finally we study the transformation of the quantum measures under conformal maps. We say that $\phi:\tilde{D}\rightarrow D$ is a conformal map if $\phi$ is analytic, one to one, and onto.
Let 
\begin{equation}\label{equation::Lambda}
\Lambda:=\left\{\phi:\tilde{D}\rightarrow D; \textrm{ $\tilde{D}$ is a planar domain and $\phi$ is a conformal map from $\tilde{D}$ to $D$}\right\}
\end{equation} 
be the collection of the conformal maps onto $D$. If $h$ is an instance of the GFF on $D$,  then $h\circ\phi$ is a GFF on $\tilde{D}$ (see Proposition \ref{proposition::conformal}). In \cite{DuplantierSheffieldLQGKPZ}, the authors proved the following transformation rule. For each $\phi\in\Lambda$, let 
\begin{equation}\label{equation::tildeh}
\tilde{h}_{\phi}=h\circ\phi+Q\log|\phi'|, 
\end{equation}
where $Q=2/\gamma + \gamma/2$. Then it is almost surely the case that $\mu^{h}$ is the image under $\phi$ of $\mu^{\tilde{h}_{\phi}}$. That is, we have
\begin{equation}\label{equation::transformrule}
\mu^{h}(A)=\mu^{\tilde{h}_{\phi}}\left(\phi^{-1}(A)\right)
\end{equation}
for each Borel set $A\subset D$. In other words, for each \textit{given} conformal map $\phi\in\Lambda$, the field-measure correspondence $\rho$ as defined in (\ref{equation::map}) a.s.\ commutes with $\phi$ (see Figure \ref{figure::dia}). 
\begin{figure}[!hbp]
\begin{center}
\includegraphics[scale=0.7]{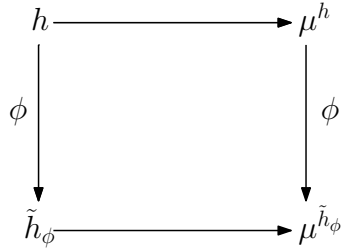}
\end{center}
\caption{\label{figure::dia}\normalsize{ In this commutative diagram, the two rightward arrows correspond to the field-measure correspondence $\rho$ in (\ref{equation::map}). The downward arrow on the left corresponds to the pullback of distributions given by (\ref{equation::tildeh}). The downward arrow on the right corresponds to the pullback of measures (\ref{equation::transformrule}).}}
\end{figure}

We will establish the fact that the transformation rule a.s. holds \textit{simultaneously} for all $\phi\in\Lambda$:
\begin{thm}\label{thm::conformalvariance}
Fix $\gamma\in[0,2)$ and define $D, h, \mu^{h}, \Lambda$ as above. Then it is almost surely the case that for all $\phi\in\Lambda$, the measures $\mu^{\tilde{h}_{\phi}}$ with $\tilde{h}_{\phi}$ as in (\ref{equation::tildeh}) are well-defined and the transformation rule (\ref{equation::transformrule}) holds simultaneously for all $\phi\in\Lambda$.
\end{thm}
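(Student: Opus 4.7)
The plan is to bootstrap the $\phi$-by-$\phi$ a.s.\ transformation rule~(\ref{equation::transformrule}) to a single full-measure event on which the identity holds for every $\phi\in\Lambda$ simultaneously. The argument has three stages: first, construct simultaneously for all $\phi$ a pushforward-candidate measure on $\tilde D$; second, verify that the mollifier-based definition of $\mu^{\tilde h_\phi}$ agrees with it on a countable dense subfamily of $\Lambda$; and third, extend to all of $\Lambda$ by continuity.

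First, I would apply Theorem~\ref{prop::convergence} to $h$ on $D$, obtaining a full-measure event $\Omega_0$ on which $\mu^{h}$ exists as a weak limit of mollifier approximations. On $\Omega_0$, for each $\phi\in\Lambda$ the set function $\nu_\phi(A):=\mu^{h}(\phi(A))$ defines a Radon measure on $\tilde D$ simultaneously for all $\phi$, since $\phi$ is a homeomorphism. The theorem then reduces to the statement that, on some possibly smaller full-measure event, the mollifier measures $\tilde\mu^{\tilde h_\phi}_\epsilon$ on $\tilde D$ converge weakly to $\nu_\phi$ for every $\phi\in\Lambda$.

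Second, I would pick a countable dense subset $\Lambda_\ast\subset\Lambda$ in the topology of local uniform convergence of $(\phi,\phi')$ on compacta. By the Riemann mapping theorem and standard normalisations, the conformal maps from a simply connected $\tilde D$ onto $D$ form a finite-parameter family (parametrised by $\mathrm{Aut}(D)$), which has a countable dense subset with rational data; the multiply connected case reduces to the simply connected one via a countable exhaustion of $\tilde D$ by simply connected subdomains. For each fixed $\phi\in\Lambda_\ast$, Proposition~\ref{proposition::conformal} ensures $h\circ\phi$ is a GFF on $\tilde D$, so Theorem~\ref{prop::convergence} applied to $\tilde h_\phi$ produces $\mu^{\tilde h_\phi}$ a.s., and the previously known transformation rule for that single $\phi$ identifies it with $\nu_\phi$ a.s. Countable intersection then yields a full-measure event $\Omega_1\subset\Omega_0$ on which both the mollifier convergence and the identity $\mu^{\tilde h_\phi}=\nu_\phi$ hold simultaneously for every $\phi\in\Lambda_\ast$.

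Third, for general $\phi\in\Lambda$, choose $\phi_n\in\Lambda_\ast$ with $(\phi_n,\phi_n')\to(\phi,\phi')$ locally uniformly on compacta (possible via Koebe distortion and normal-family arguments for univalent maps). Then $\tilde h_{\phi_n}\to\tilde h_\phi$ as distributions on $\tilde D$, $\nu_{\phi_n}\to\nu_\phi$ weakly on compact subsets by continuity of pushforwards under locally uniformly convergent homeomorphisms, and for each fixed $\epsilon$ the mollifier object $\tilde\mu^{\tilde h_{\phi_n}}_\epsilon\to\tilde\mu^{\tilde h_\phi}_\epsilon$ weakly. The \emph{main obstacle} is to exchange the limits in $n$ and $\epsilon$: one must upgrade Theorem~\ref{prop::convergence} so that the convergence $\tilde\mu^{\tilde h_\phi}_\epsilon\to\mu^{\tilde h_\phi}$ is uniform as $\phi$ ranges over a precompact family. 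To handle this I would rewrite $(h\circ\phi)\ast f_\epsilon(z)$ as an integral of $h$ against a $\phi$-deformed mollifier on $D$ whose shape and effective scale are controlled by $\phi'(z)$, reducing the task to uniform mollifier convergence for $h$ on $D$ over a compact family of such deformed mollifiers. Uniform Koebe distortion estimates give the required compactness of this family over any precompact subset of $\Lambda$, after which the multi-scale Kolmogorov-type regularity arguments underlying the proofs of Theorems~\ref{theorem::measureconvergence} and~\ref{prop::convergence} should extend to this parametrised family. Establishing this uniformity is the crux of the argument; once in place, the continuity extension closes the proof and yields $\mu^{\tilde h_\phi}=\nu_\phi=\phi^{-1}_{\,*}\mu^{h}$ simultaneously for all $\phi\in\Lambda$ a.s.
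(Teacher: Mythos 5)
Your high-level reduction is on the right track -- rewrite $(h\circ\phi)*f_\epsilon$ as the pairing of $h$ against a $\phi$-deformed mollifier, then show the deformed family is close to $f$ uniformly over $\phi$ -- and this is indeed how the paper proceeds. But the proposal leaves the actual crux unresolved, and contains a structural misconception.

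\textbf{The gap.} You correctly flag ``establishing uniform convergence over a precompact family of deformed mollifiers'' as the crux, but you treat it as a black box: ``the multi-scale Kolmogorov-type regularity arguments underlying the proofs of Theorems~\ref{theorem::measureconvergence} and~\ref{prop::convergence} should extend to this parametrised family.'' This is precisely the hard analysis, and it does not extend for free: one now needs to control a \emph{supremum over an uncountable index set of Gaussian random variables}, not just a single Gaussian. The paper handles this via two ingredients you do not mention: (i) a quantitative $L^\infty$ bound $\lVert f_\epsilon^\phi - f\rVert_\infty \lesssim\epsilon$ uniform over the normalized family $\Lambda^*$, obtained from de~Branges's theorem (the Koebe function gives the extremal coefficient growth, so $\phi^{-1}(\epsilon z)/\epsilon = z + O(\epsilon)$ with a uniform constant), and (ii) Gaussian-suprema concentration -- Houdr\'e's variance bound and the Borell--TIB inequality (Proposition~\ref{propsition::supest}) -- which turn the pointwise variance bound $\mathrm{var}\bigl((h^*,f_\epsilon^\phi)-(h^*,f)\bigr)\lesssim\epsilon^2$ into a sub-Gaussian tail for $\sup_{\phi\in\Lambda^*}(h^*,f_\epsilon^\phi)-(h^*,f)$. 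That concentration feeds into a second-moment/Borel--Cantelli argument, conditioned on the coarser circle averages $h_{2^{-k-1}}(z)$ as in Lemma~\ref{lemma::Nconvergence}. Without these Gaussian-extrema inputs, there is no reason for the second moment of the $\phi$-supremized integrand to decay at scale; this is exactly where a naive ``Kolmogorov continuity over a compact parameter family'' argument breaks, because the parameter space $\Lambda^*$ is infinite-dimensional.

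\textbf{The misconception.} You assert that $\Lambda$ is ``a finite-parameter family (parametrised by $\mathrm{Aut}(D)$).'' It is not: $\Lambda$ ranges over conformal maps $\phi:\tilde D\to D$ with $\tilde D$ an \emph{arbitrary} simply connected domain, so the source domain varies and $\Lambda$ is infinite-dimensional. What saves the paper is locality plus rotational symmetry of $f$: at each point $z$, one may renormalize $\phi^{-1}$ so that $\phi^{-1}(z)=z$ and $(\phi^{-1})'(z)=1$, and then the restriction to the scale-$\epsilon$ neighborhood of $z$ is governed by a schlicht function; the schlicht class $\Lambda^*$ is \emph{compact} (by Koebe) but not finite-dimensional, and de~Branges gives the explicit uniform bounds. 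Relatedly, the multiply connected case you worry about is vacuous: since $D$ is simply connected and $\phi$ is a homeomorphism, $\tilde D$ is automatically simply connected.

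\textbf{On the overall architecture.} Your countable-dense subfamily + continuity extension is not wrong per se, but it is superfluous once the uniform supremum estimate is in hand: the paper bounds $\sup_\phi$ and $\inf_\phi$ directly in equations~(\ref{equation::sup})--(\ref{equation::inf}), with no need to first certify the rule on a countable set and then pass to a limit. The countable set appears in the paper only inside Lemma~\ref{lemma::est}, as a device for applying the Gaussian-extrema inequalities (which are stated for sequences). So the work you deferred is where the entire proof lives, and the scaffolding you added around it is unnecessary once that work is done.
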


Theorem \ref{thm::conformalvariance} implies that the quantum area measure can be established in a completely parameterization independent way. Like Theorem \ref{theorem::measureconvergence}, this result requires a deep and highly novel exploration of the properties of $h_\epsilon(z)$ and of the
corresponding measures. The proof combines general facts about
distributions with some results about the extrema of Gaussian process. This work is subtle and technical.
But we feel it is also quite important, as it puts the basic Liouville
quantum gravity equivalence relationship (involved in the very
definition of \textit{quantum surface} in \cite{DuplantierSheffieldLQGKPZ} and many other
papers) on a much more solid and satisfying foundation.

This article is organized as follows. In Section \ref{section::prelimi} we will provide some background on the GFF, jointly Gaussian random variables and conformal maps. The proofs of the theorems are contained in Section \ref{section::proof}. In Section \ref{section::boundary}, we will discuss the generalization to boundary measures. 
\medbreak
\noindent\textbf{Acknowledgment.} We would like to thank Ewain Gwynne, Nina Holden and Xin Sun for helpful comments on the preliminary version of this article.

\section{Preliminaries}\label{section::prelimi}
\subsection{GFF and the circle average processes}\label{section::pre}
In this section, we give a brief review of the construction of the GFF as well as some properties that will be important for us later. We refer the reader to $\cite{SheffieldGFFMath}$ for a detailed introduction.

Let $D$ be a bounded simply connected domain in $\mathbb{R}^{2}$, and let $H_{s}(D)$ denote the the space of $C^{\infty}$ real-valued functions compactly supported on $D$.

We let $H(D)$ be the Hilbert space closure of $H_{s}(D)$ equipped with the Dirichlet inner product:
\[(f_{1},f_{2})_{\nabla}:=\frac{1}{2\pi}\int_{D}\nabla f_{1}(z)\cdot\nabla f_{2}(z)dz.\]
The GFF on $D$ can be expressed as a random sum of the form $h=\sum_{n}\alpha_{n}f_{n}$, where the $f_{n}$ are an orthonormal basis of $H(D)$ and the $\alpha_{n}$ are i.i.d. standard Gaussian random variables. The sum a.s.\ does not converge in $H(D)$, but it a.s.\ converges in the space of distributions on $D$. In fact, for each $f\in H(D)$, we may define $(h,f)_{\nabla}$ as a zero-mean Gaussian random variable and for $f_{1}, f_{2}\in H(D)$, we have
\[\mathrm{cov}\left((h,f_{1})_{\nabla}, (h,f_{2})_{\nabla}\right)=(f_{1}, f_{2})_{\nabla}.\]

For $x, y\in D$, we let 
\begin{equation}\label{equation::green}
G(x,y):=-\log|y-x|-\tilde{G}_{x}(y),
\end{equation}
where $\tilde{G}_{x}(y)$ is the harmonic extension to $y\in D$ of the function of $y$ on $\partial D$ given by $-\log|y-x|$. Then $G(x,y)$ is the Green's function for the Laplacian on $D$, i.e., $\Delta G(x,\cdot)=-2\pi\delta_{x}(\cdot)$ for $x\in D$, with zero boundary conditions. It is non-negative on $D\times D$.

Integrating by parts implies that for $\rho_{1},\rho_{2}\in H_{s}(D)$, we have
\begin{eqnarray*}
\mathrm{cov}\left((h,\rho_{1}), (h,\rho_{2})\right)&=&(2\pi)^{2}\mathrm{cov}\left((h,-\Delta^{-1}\rho_{1})_{\nabla}, (h,-\Delta^{-1}\rho_{2})_{\nabla}\right)\\
&=&(2\pi)^{2}(-\Delta^{-1}\rho_{1},-\Delta^{-1}\rho_{2})_{\nabla}\\
&=&-2\pi\int_{D}\rho_{1}(x)\cdot\Delta^{-1}\rho_{2}(x)dx\\
&=&\iint_{D\times D}\rho_{1}(x)G(x,y)\rho_{2}(y)dxdy.\\
\end{eqnarray*}

One can also define the GFF with non-zero boundary conditions. Suppose that $D$ has smooth boundary. If $f:\partial D\rightarrow\mathbb{R}$ is a function that is $L^{1}$ with respect to the harmonic measure on $\partial D$ viewed from some point in $D$, and $F$ is its harmonic extension from $\partial D$ to $D$, then the law of a GFF on $D$ with boundary condition $f$ is given by $h+F$ where $h$ is a zero boundary GFF on $D$.

We record two important properties of the GFF in the following propositions.

\begin{prop}[Markov property]\label{proposition::markov}
Suppose that $h$ is an instance of the zero boundary GFF on $D$. For any open subset $W\subset D$, there is a decomposition of $h$:
\[h=h_{W}+h_{W}^{\perp},\]
where $h_{W}$ and $h_{W}^{\perp}$ are random distributions on $D$ such that

1. The restriction of $h_{W}$ to $W$ is a zero boundary GFF on $W$, and $h_{W}$ is zero outside of $W$.

2. $h_{W}^{\perp}$ is harmonic on $W$.

3. Define $\mathscr{F}_{W}:=\sigma\{(h_{W},f)_{\nabla}, f\in H(D)\}$ and $\mathscr{F}_{W}^{\perp}:=\sigma\{(h_{W}^{\perp},f)_{\nabla}, f\in H(D)\}$, then $\mathscr{F}_{W}$ and $\mathscr{F}_{W}^{\perp}$ are independent.
\end{prop}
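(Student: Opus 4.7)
The plan is to derive the decomposition from an orthogonal splitting of the Hilbert space $H(D)$ and transport it to the level of random distributions via the series representation $h=\sum_n\alpha_n f_n$. Under this translation, Dirichlet orthogonality becomes probabilistic independence, which supplies property~3, while the geometric meaning of each factor of the splitting supplies properties~1 and~2.

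First I would introduce two closed subspaces of $H(D)$. Let $H_0(W)\subset H(D)$ be the Dirichlet-closure of the subspace of $H_s(D)$ consisting of functions with compact support inside $W$, and let $H^{\perp}(W)$ be its orthogonal complement in $H(D)$, so that $H(D)=H_0(W)\oplus H^{\perp}(W)$ automatically. For $g\in H^{\perp}(W)$ and $\phi\in C_c^\infty(W)$, the integration-by-parts identity
\[
0=(g,\phi)_{\nabla}=-\frac{1}{2\pi}\int_D g(x)\,\Delta\phi(x)\,dx
\]
gives $\Delta g=0$ on $W$ distributionally, and Weyl's lemma upgrades $g|_W$ to a classical harmonic function. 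Separately, the zero-extension/restriction map identifies $H_0(W)$ isometrically with $H(W)$: on the dense subspace $C_c^\infty(W)$ the Dirichlet integrals over $D$ and over $W$ agree by support considerations, and the image is dense in $H(W)$ by definition.

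Next, I would pick orthonormal bases $\{e_n\}$ of $H_0(W)$ and $\{e_n^\perp\}$ of $H^{\perp}(W)$, whose union is orthonormal in $H(D)$, so that the defining series for $h$ splits as $h=\sum_n\alpha_n e_n+\sum_n\beta_n e_n^\perp$ with $\alpha_n,\beta_n$ i.i.d.\ standard Gaussians. Setting
\[
h_W:=\sum_n\alpha_n e_n, \qquad h_W^\perp:=\sum_n\beta_n e_n^\perp,
\]
each sum converges a.s.\ as a random distribution on $D$ by the same argument used to construct $h$ itself. Property~1 then follows by transporting $\sum_n\alpha_n(e_n|_W)$ through the isometric isomorphism above, so that this restricted series has the law of a zero-boundary GFF on $W$; the vanishing of $h_W$ on $D\setminus\overline{W}$ comes from $L^2$-orthogonality of each $e_n$ to every test function supported there, which itself follows from the Dirichlet-closure definition together with Poincar\'e's inequality. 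Property~2 follows because each $e_n^\perp$ is harmonic on $W$ and distributional harmonicity is preserved under the a.s.\ convergent random sum, after a final application of Weyl's lemma. Property~3 is immediate, since $h_W$ and $h_W^\perp$ are measurable functionals of the disjoint independent Gaussian families $(\alpha_n)$ and $(\beta_n)$, so the sigma-algebras $\mathscr{F}_W$ and $\mathscr{F}_W^\perp$ are independent.

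The main technical obstacle is the isometric identification $H_0(W)\simeq H(W)$, which is what reduces property~1 for $h_W$ to the standard construction of a GFF on $W$. The subtlety is that the zero-extension of an arbitrary $H(W)$ function need not be smooth across $\partial W$; however, for the dense class $C_c^\infty(W)$ the extension lies in $H_s(D)$ and the Dirichlet norms computed in $D$ and in $W$ literally coincide, so one obtains the isomorphism by continuous extension and uses density on both sides. A similar density-plus-Poincar\'e argument is needed to rigorously conclude that $h_W$ evaluated against any test function supported in $D\setminus\overline{W}$ vanishes almost surely, not merely in expectation.
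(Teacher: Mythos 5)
Your proof is correct and is the standard argument: the paper records this Markov property without proof (deferring to its Gaussian free field reference), and the orthogonal decomposition $H(D)=H_{0}(W)\oplus H^{\perp}(W)$, Weyl's lemma for the complement, the isometric identification $H_{0}(W)\simeq H(W)$ by zero extension, and the splitting of the defining Gaussian series are exactly how the result is established there. The only step worth making explicit is the upgrade from ``$(h_{W},\rho)=0$ (resp.\ $(h_{W}^{\perp},\Delta\rho)=0$) almost surely for each \emph{fixed} test function'' to the almost sure statement simultaneously for all test functions, which follows in the usual way by restricting to a countable dense family and using that the random sums are a.s.\ continuous linear functionals.
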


\begin{rk}\label{remark::independence}
Suppose that $f\in H_{s}(D)$ is compactly supported in $W$ and $(h,f)=(h_{W},f)+(h_{W}^{\perp},f)$. Given $(h_{W}^{\perp},f)$, the Markov property implies that $(h,f)$ and $\mathscr{F}_{W}^{\perp}$ are conditionally independent. In particular, if $g\in H_{s}(D)$ is compactly supported in $W^{c}$, then $(h,g)=(h_{W}^{\perp},g)$ and it follows that $(h,f)$ and $(h,g)$ are conditionally independent given $(h_{W}^{\perp},f)$.
\end{rk}

\begin{prop}[Conformal invariance of the GFF]\label{proposition::conformal}
Suppose that $h$ is an instance of the zero boundary GFF on $D$ and $\phi:\tilde{D}\rightarrow D$ is a conformal map. We define $h\circ\phi$ to be a distribution on $\tilde{D}$ by $(h\circ\phi,\tilde{\rho}):=(h,|(\phi^{-1})'|^{2}\tilde{\rho}\circ\phi^{-1})$ for $\tilde{\rho}\in H_{s}(\tilde{D})$, where $(\phi^{-1})'$ is the complex derivative of $\phi^{-1}$. Then $h\circ\phi$ is a zero boundary GFF on $\tilde{D}$.
\end{prop}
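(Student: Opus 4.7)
The plan is to check that $h\circ\phi$, as defined by the stated pairing, is a centered Gaussian process on $H_s(\tilde D)$ whose covariance kernel equals the Green's function $G_{\tilde D}$ of $\tilde D$. By the covariance characterization recorded just above the statement (variance of $(h,\rho)$ is $\iint \rho(x)G_D(x,y)\rho(y)\,dxdy$), this identifies $h\circ\phi$ as a zero-boundary GFF on $\tilde D$.

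First I would verify well-posedness: for $\tilde\rho\in H_s(\tilde D)$, the function $\rho:=|(\phi^{-1})'|^{2}\,\tilde\rho\circ\phi^{-1}$ is smooth on $D$ since $\phi^{-1}$ is analytic, and its support equals $\phi(\operatorname{supp}\tilde\rho)$, which is compact in $D$ because $\phi$ is a homeomorphism. So $\rho\in H_s(D)$ and $(h,\rho)$ is a well-defined centered Gaussian. Linearity of the defining formula in $\tilde\rho$ then shows that $(h\circ\phi,\cdot)$ is a centered Gaussian process on $H_s(\tilde D)$.

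The heart of the proof is the covariance computation. For $\tilde\rho_1,\tilde\rho_2\in H_s(\tilde D)$ with images $\rho_1,\rho_2$,
\begin{equation*}
\operatorname{Cov}\bigl((h\circ\phi,\tilde\rho_1),(h\circ\phi,\tilde\rho_2)\bigr)=\iint_{D\times D}\rho_1(x)\,G_D(x,y)\,\rho_2(y)\,dx\,dy.
\end{equation*}
I would change variables $x=\phi(\tilde x)$, $y=\phi(\tilde y)$, using that conformality gives $dx=|\phi'(\tilde x)|^{2}\,d\tilde x$ and $(\phi^{-1})'\circ\phi=1/\phi'$, so that $\rho_i(\phi(\tilde x))\,|\phi'(\tilde x)|^{2}=\tilde\rho_i(\tilde x)$. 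The integral becomes
\begin{equation*}
\iint_{\tilde D\times\tilde D}\tilde\rho_1(\tilde x)\,G_D(\phi(\tilde x),\phi(\tilde y))\,\tilde\rho_2(\tilde y)\,d\tilde x\,d\tilde y.
\end{equation*}
It remains to invoke conformal invariance of the Green's function, $G_D(\phi(\tilde x),\phi(\tilde y))=G_{\tilde D}(\tilde x,\tilde y)$. This is standard: the right-hand side, viewed as a function of $\tilde x$, is harmonic on $\tilde D\setminus\{\tilde y\}$ with the correct $-\log|\tilde x-\tilde y|+O(1)$ singularity (harmonicity is preserved by holomorphic maps and the logarithmic pole is preserved up to a smooth correction) and vanishes on $\partial\tilde D$; uniqueness of the Green's function does the rest.

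Combining these steps, $h\circ\phi$ is a centered Gaussian process on $H_s(\tilde D)$ with the Green's-function covariance of $\tilde D$, so it is a zero-boundary GFF on $\tilde D$. The only nontrivial ingredient is the conformal invariance of $G_D$; an equivalent route routes everything through the Dirichlet inner product, observing that since $D\phi^{T}D\phi=|\phi'|^{2}I$ for a conformal $\phi$, the factor $|\phi'|^{2}$ from $|\nabla(f\circ\phi)|^{2}$ cancels exactly with the Jacobian $|\phi'|^{-2}$ in passing from $d\tilde z$ to $dz$, so $f\mapsto f\circ\phi$ is a Hilbert space isometry $H(D)\to H(\tilde D)$, and an orthonormal expansion $h=\sum\alpha_n f_n$ pushes forward to an orthonormal expansion of $h\circ\phi$ with the same i.i.d.\ standard Gaussian coefficients.
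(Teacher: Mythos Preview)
Your argument is correct and is the standard proof of conformal invariance of the zero-boundary GFF. Note, however, that the paper does not actually supply a proof of this proposition: it is recorded in the preliminaries section as a known background fact (the reader is referred to \cite{SheffieldGFFMath}), so there is no ``paper's own proof'' to compare against. Both routes you outline---the Green's-function covariance computation via the change of variables $x=\phi(\tilde x)$ together with the conformal invariance $G_D(\phi(\tilde x),\phi(\tilde y))=G_{\tilde D}(\tilde x,\tilde y)$, and the Dirichlet-isometry argument pushing forward an orthonormal expansion---are standard and either would serve as a complete proof.
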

  
Suppose that $h$ is an instance of the zero boundary GFF on $D$. Denote by $h_{\epsilon}(z)$ the average value of $h$ on the circle of radius $\epsilon$ centered at $z$. (For this definition, we assume that $h$ is identically zero outside of $D$.) For $z,y\in D$, let
\[\xi_{\epsilon}^{z}(y):=-\log (|z-y|\vee\epsilon)-\tilde{G}_{z,\epsilon}(y),\]
where $\tilde{G}_{z,\epsilon}(y)$ is the harmonic extension to $y\in D$ of the function of $y$ on $\partial D$ given by $-\log (|z-y|\vee\epsilon)$. Observe that as a distribution $\frac{1}{2\pi}(-\Delta\xi_{\epsilon}^{z})$ is equal to the uniform measure on $\partial B_{\epsilon}(z)$. Integrating by parts yields that
\[h_{\epsilon}(z)=(h,\xi_{\epsilon}^{z})_{\nabla}.\]
We summarize several properties of the circle average processes in the following proposition. The reader may consult \cite[Section 3.1]{DuplantierSheffieldLQGKPZ} for the proofs.

\begin{prop}\label{prop::circleaverage}

1. The process $h_{\epsilon}(z)$ has a modification which is a.s.\ locally $\eta$-H\"{o}lder continuous in the pair $(z,\epsilon)\in\mathbb{C}\times(0,\infty)$ for every $\eta<1/2$.

2. If $B_{\epsilon_{1}}(z_{1})$ and $B_{\epsilon_{2}}(z_{2})$ are disjoint and both contained in $D$, then \begin{equation}\label{eqaution::samecenter}
\mathrm{cov}\left(h_{\epsilon_{1}}(z_{1}), h_{\epsilon_{2}}(z_{2})\right)=G(z_{1},z_{2}).
\end{equation}

3. If $B_{\epsilon_{1}}(z)\subset D$ and $\epsilon_{1}\ge\epsilon_{2}$, then
\begin{equation}\label{equation::variance}
\mathrm{cov}\left(h_{\epsilon_{1}}(z), h_{\epsilon_{2}}(z)\right)=-\log\epsilon_{1}+\log C(z;D),
\end{equation}
where $C(z;D)$ is the conformal radius of $D$ viewed from $z$. That is, $C(z;D)=|\psi'(z)|^{-1}$, where $\psi: D\rightarrow\mathbb{D}$ is a conformal map to the unit disk $\mathbb{D}$ with $\psi(z)=0$.

4. Write $\mathcal{V}_{t}=h_{e^{-t}}(z)$, and $t_{0}^{z}=\inf\{t: B_{e^{-t}}(z)\subset D\}$. If $z\in D$ is fixed, then the law of $V_{t}:=\mathcal{V}_{t_{0}^{z}+t}-\mathcal{V}_{t_{0}^{z}}$ is that of a standard Brownian motion independent of $\mathcal{V}_{t_{0}^{z}}$.
\end{prop}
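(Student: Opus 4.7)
The plan is to exhibit a single almost-sure event $\Omega_0$, depending only on the GFF $h$ on $D$, on which the well-definedness of $\mu^{\tilde h_\phi}$ and the transformation rule both hold for every $\phi\in\Lambda$ simultaneously. I would take $\Omega_0$ to be the event on which (i) the conclusion of Theorem \ref{theorem::measureconvergence} holds (weak convergence $\mu_\epsilon\to\mu^h$ as $\epsilon\to 0$ without restriction to dyadic subsequences), (ii) the circle-average process $(z,\epsilon)\mapsto h_\epsilon(z)$ is locally $\eta$-Hölder for each $\eta<1/2$ (Proposition \ref{prop::circleaverage}(1)), and (iii) the convolution conclusion of Theorem \ref{prop::convergence} holds. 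For $\phi\in\Lambda$, let $\nu_\phi$ denote the Radon measure on $\tilde D$ defined by $\nu_\phi(A):=\mu^h(\phi(A))$ for Borel $A\subset\tilde D$. The transformation rule is equivalent to $\mu^{\tilde h_\phi}=\nu_\phi$, so the theorem reduces to showing that on $\Omega_0$, for every $\phi\in\Lambda$, the circle-average approximations of $\tilde h_\phi$ on $\tilde D$ converge weakly on compacts to $\nu_\phi$.

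Because $\log|\phi'|$ is harmonic on $\tilde D$, its circle averages equal its pointwise values, so
\begin{equation*}
\tilde h_{\phi,\epsilon}(\tilde z) \;=\; (h\circ\phi)_\epsilon(\tilde z) + Q\log|\phi'(\tilde z)|.
\end{equation*}
The quantity $(h\circ\phi)_\epsilon(\tilde z)$ is the pairing of $h$ against the image under $\phi$ of the uniform measure on $\partial B_\epsilon(\tilde z)$, which is supported on the curve $\phi(\partial B_\epsilon(\tilde z))\subset D$. By Koebe distortion applied to $\phi$, this curve lies within Hausdorff distance $O(\epsilon^2)$ of the circle $\partial B_{|\phi'(\tilde z)|\epsilon}(\phi(\tilde z))$ (with implicit constant depending on $\phi$ and on $\mathrm{dist}(\tilde z,\partial\tilde D)$), and its arc-length density differs from that of the circle by a matching error. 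I would then invoke the Hölder regularity of $h_\epsilon$ in $(z,\epsilon)$, combined with tail bounds for suprema of Gaussian processes to upgrade pointwise regularity into uniform control over families of perturbed circles, in order to prove
\begin{equation*}
\sup_{\tilde z\in K}\bigl|(h\circ\phi)_\epsilon(\tilde z) - h_{|\phi'(\tilde z)|\epsilon}(\phi(\tilde z))\bigr| \;\longrightarrow\; 0 \quad \text{as } \epsilon\to 0,
\end{equation*}
for every compact $K\subset\tilde D$. Using the identity $\gamma Q-\gamma^2/2=2$, a direct computation then recasts the integrand of $\mu^{\tilde h_\phi}_\epsilon$ as the pullback under $\phi$ of the integrand of $\mu^h_{|\phi'(\tilde z)|\epsilon}$ weighted by the Jacobian $|\phi'(\tilde z)|^2$, up to an $o(1)$ multiplicative error.

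Fix a continuous compactly supported test function $g:D\to\mathbb R$. Changing variables via $z=\phi(\tilde z)$, the comparison above yields
\begin{equation*}
\int_{\tilde D}(g\circ\phi)\,d\mu^{\tilde h_\phi}_\epsilon \;=\; \int_D g(z)\,\exp\!\Bigl(\gamma h_{\delta_\epsilon(z)}(z)+\tfrac{\gamma^2}{2}\log\delta_\epsilon(z)\Bigr)dz + o(1),
\end{equation*}
with $\delta_\epsilon(z):=|\phi'(\phi^{-1}(z))|\,\epsilon\to 0$ uniformly on $\mathrm{supp}\,g$. To show the right-hand side converges to $\int g\,d\mu^h$, I would partition $\mathrm{supp}\,g$ into pieces on which $\delta_\epsilon$ is nearly constant, apply Theorem \ref{theorem::measureconvergence} on each piece, and bound the discrepancy with the help of the Hölder control in Proposition \ref{prop::circleaverage}(1) plus the same Gaussian-extremum tail bounds. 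Since every estimate is deterministic once $\Omega_0$ is fixed, and depends on $\phi$ only through $\phi$ and $\phi'$, the conclusion holds simultaneously for every $\phi\in\Lambda$. The main obstacle is exactly the uniformity required in the comparison of circle averages: the error bounds must survive pre-composition by an arbitrary conformal map, which is what distinguishes the simultaneous statement from its much easier single-$\phi$ counterpart and forces one to go beyond pointwise Hölder estimates to uniform modulus-of-continuity control for the Gaussian field $h_\epsilon$.
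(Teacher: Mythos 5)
Your proposal does not address the statement it is supposed to prove. The statement in question is Proposition \ref{prop::circleaverage}, which asserts four basic facts about the circle-average process $h_\epsilon(z)$ of the GFF: the existence of a modification that is locally $\eta$-H\"older in $(z,\epsilon)$ for every $\eta<1/2$; the covariance identity $\mathrm{cov}(h_{\epsilon_1}(z_1),h_{\epsilon_2}(z_2))=G(z_1,z_2)$ for disjoint balls; the identity $\mathrm{cov}(h_{\epsilon_1}(z),h_{\epsilon_2}(z))=-\log\epsilon_1+\log C(z;D)$ for nested circles with common center; and the fact that $t\mapsto h_{e^{-t}}(z)$ evolves as a standard Brownian motion after time $t_0^z$. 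What you have written instead is an outline of a proof of Theorem \ref{thm::conformalvariance}, the simultaneous conformal transformation rule for the measures $\mu^{\tilde h_\phi}$. None of the four assertions of the proposition is established, or even discussed, in your argument.

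The mismatch is not merely cosmetic: your outline explicitly \emph{invokes} Proposition \ref{prop::circleaverage}(1) as one of the ingredients defining your good event $\Omega_0$, so read as a proof of that proposition it would be circular. A correct treatment of the proposition would instead proceed from the representation $h_\epsilon(z)=(h,\xi_\epsilon^z)_\nabla$ given in Section \ref{section::pre}: parts 2 and 3 follow by computing $(\xi_{\epsilon_1}^{z_1},\xi_{\epsilon_2}^{z_2})_\nabla$ via integration by parts against the Green's function (using the mean-value property of harmonic functions for disjoint balls, and the explicit form of $-\log(|z-y|\vee\epsilon)$ for concentric circles); part 4 follows from part 3, since the increments $\mathcal V_{t}-\mathcal V_{s}$ are centered Gaussian with variance $t-s$ and independent increments by the covariance structure; and part 1 requires a Kolmogorov--Centsov continuity argument based on moment bounds for $h_{\epsilon_1}(z_1)-h_{\epsilon_2}(z_2)$. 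The paper itself does not reprove these facts but defers to \cite[Section 3.1]{DuplantierSheffieldLQGKPZ}; if you intend to supply a proof, that is the route to follow. As it stands, your submission proves nothing about the circle-average process.
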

\begin{rk}\label{remark::case}
Combining (\ref{equation::def}) with (\ref{equation::variance}) implies that the Liouville quantum gravity measure $\mu$ is a special case of the Gaussian multiplicative chaos (\ref{equation::chaos}) where $h$ is the $2d$-GFF and $\sigma(dz)=C(z;D)^{\frac{\gamma^{2}}{2}}dz$.
\end{rk}

\subsection{Extrema of jointly Gaussian random variables}
In this section, we recall some results about the fluctuation and tail bounds of Gaussian extrema, which we will make use of in the proof of Theorem \ref{thm::conformalvariance}.
\begin{comment}
The following proposition was stated in the introduction of \cite{ChaChaos}.

\begin{prop}\label{propostion::maxest}
If $X_{1}, X_{2},\ldots, X_{n}$ are centered jointly Gaussian random variables each with variance at most $\sigma^{2}$, then
\begin{equation}\label{equation::max}
\mathrm{var}(\max X_{i})\le\sigma^{2},
\end{equation}
and for any $r>0$, we have
\begin{equation}\label{equation::tailofmax}
\mathbb{P}\left(\left|\max X_{i}-\mathbb{E}(\max X_{i})\right|\ge r\right)\le 2e^{-\frac{r^{2}}{2\sigma^{2}}}.
\end{equation}
\end{prop}
We have an easy extension:
\end{comment}
\begin{prop}\label{propsition::supest}
If $X_{1}, X_{2},\ldots$ is a sequence of centered jointly Gaussian random variables each with variance at most $\sigma^{2}$, such that $\sup X_{i}$ are a.s.\ finite, then
\begin{equation}\label{equation::supvar}
\mathrm{var}(\sup X_{i})\le\sigma^{2},
\end{equation}
and for any $r>0$, we have
\begin{equation}\label{equation::tailofsup}
\mathbb{P}(\left|\sup X_{i}-\mathbb{E}(\sup X_{i})\right|\ge r)\le 2e^{-\frac{r^{2}}{2\sigma^{2}}}.
\end{equation}
\end{prop}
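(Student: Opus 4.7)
The plan is to deduce the sequence version from the standard finite-collection version (which follows from the Borell--Tsirelson--Ibragimov--Sudakov concentration inequality applied to the centered Gaussian vector $(X_1,\ldots,X_n)$, furnishing both $\mathrm{var}(\max_{i\le n} X_i)\le \sigma^2$ and the matching sub-Gaussian concentration around the mean). Set $M_n:=\max_{i\le n} X_i$ and $M:=\sup_i X_i$; by hypothesis $M<\infty$ a.s.\ and $M_n\nearrow M$ pointwise.

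The main obstacle is controlling $\mu_n:=\mathbb{E}[M_n]$ so that the finite bounds can be passed to the limit. The sequence $\mu_n$ is non-decreasing in $n$, and I would first show it is bounded by contradiction: if $\mu_n\to\infty$, choose $r$ with $2e^{-r^2/(2\sigma^2)}<1/2$, so that the finite tail bound gives $\mathbb{P}(M_n\ge \mu_n-r)\ge 1/2$. For any $A$, eventually $\mu_n-r\ge A$, whence $\mathbb{P}(M\ge A)\ge \mathbb{P}(M_n\ge A)\ge 1/2$, contradicting $M<\infty$ a.s. Hence $\mu_n\nearrow\mu$ for some finite $\mu$, and applying monotone convergence to the non-negative sequence $M_n-X_1$ (using $\mathbb{E}[X_1]=0$) identifies $\mu=\mathbb{E}[M]$.

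To finish, since $M_n-\mu_n\to M-\mu$ a.s., Fatou's lemma gives
\[
\mathrm{var}(M)=\mathbb{E}[(M-\mu)^2]\le \liminf_n \mathbb{E}[(M_n-\mu_n)^2]=\liminf_n \mathrm{var}(M_n)\le \sigma^2,
\]
which is (\ref{equation::supvar}). For the tail bound, fix $r>0$ and $\varepsilon\in(0,r)$; since $M_n-\mu_n\to M-\mu$ a.s., the event $\{|M-\mu|\ge r\}$ is contained in $\liminf_n \{|M_n-\mu_n|>r-\varepsilon\}$, so Fatou's lemma for events together with the finite tail bound yields
\[
\mathbb{P}(|M-\mu|\ge r)\le \liminf_n \mathbb{P}(|M_n-\mu_n|>r-\varepsilon)\le 2e^{-(r-\varepsilon)^2/(2\sigma^2)},
\]
and letting $\varepsilon\to 0$ produces (\ref{equation::tailofsup}). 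The only genuinely non-routine point is the boundedness of $\mu_n$; this is exactly where the a.s.\ finiteness hypothesis on $\sup X_i$ is used.
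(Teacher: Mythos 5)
Your proof is correct. The paper does not supply an argument of its own for this proposition: it cites Houdr\'e for the variance bound (\ref{equation::supvar}) and Borell / Tsirelson--Ibragimov--Sudakov for the tail bound (\ref{equation::tailofsup}), both of which are stated in those sources for general (possibly infinite) Gaussian families, so the paper simply invokes them. What you do instead is take the \emph{finite}-collection versions as the starting point and pass to the countable limit by hand. That is a genuinely more self-contained route, and the one non-routine step you flag --- proving $\mu_n=\mathbb{E}[\max_{i\le n}X_i]$ stays bounded, via the finite concentration inequality, so that there is a finite limiting center $\mu=\mathbb{E}[\sup X_i]$ at all --- is precisely what the a.s.\ finiteness hypothesis buys and precisely what the cited references absorb into their general statements. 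The remaining steps are all sound: monotone convergence applied to $M_n-X_1\ge 0$ identifies $\mu$ with $\mathbb{E}[M]$, Fatou gives $\mathbb{E}[(M-\mu)^2]\le\liminf\mathrm{var}(M_n)\le\sigma^2$, and $\{|M-\mu|\ge r\}\subset\liminf_n\{|M_n-\mu_n|>r-\varepsilon\}$ together with Fatou for events and $\varepsilon\downarrow 0$ yields the tail bound. So the content is equivalent; yours is just the elementary derivation of the infinite-index case from the finite one, rather than a citation.
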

(\ref{equation::supvar}) was proved by Houdr\'{e} \cite{MR1354013}. (\ref{equation::tailofsup}) is called Borel-TIB inequality, independently invented by Borel \cite{MR0399402} and Tsirelson, Ibragimov, and Sudakov \cite{cirel1976norms}.
\begin{comment}
\begin{proof}
Note that $\max_{i=1}^{n}X_{i}\uparrow\sup X_{i}$ as $n\rightarrow\infty$. The almost sure finiteness of $\sup X_{i}$ and the fact that var$(\max_{i=1}^{n}X_{i})\le\sigma^{2}$ for all $n$ guarantee that \[\mathbb{E}|\sup X_{i}|<\infty.\] 
Therefore, (\ref{equation::supvar}) can be derived from (\ref{equation::max}) by the monotone convergence theorem and Fatou's lemma.

Since almost sure convergence implies convergence in distribution, (\ref{equation::tailofsup}) follows from (\ref{equation::tailofmax}).
\end{proof}
\end{comment}
\begin{rk}
By the symmetry of centered jointly Gaussian random variables, (\ref{equation::supvar}) and (\ref{equation::tailofsup}) also hold with $\inf X_{i}$ in place of $\sup X_{i}$.
\end{rk}
Recall that a centered random variable $X$ is said to be sub-Gaussian if there exists a constant $\sigma>0$ such that for any $r>0$, we have $\mathbb{P}[|X|\ge r]\le 2e^{-\frac{r^{2}}{2\sigma^{2}}}$. Therefore,
(\ref{equation::tailofsup}) implies that $\sup X_{i}-\mathbb{E}(\sup X_{i})$ is sub-Gaussian. It is well known (see e.g., \cite{MR0117506}) that sub-Gaussian random variables satisfy a Laplace transform condition:
\begin{prop}\label{proposition::subgaussian}
For a centered random variable $X$, if there exists a constant $\sigma>0$ such that for any $r>0$, we have
\[\mathbb{P}[|X|\ge r]\le 2e^{-\frac{r^{2}}{2\sigma^{2}}},\]
then for any $\alpha>0$, it holds that
\[\mathbb{E}\left(e^{\alpha X}\right)\le e^{4\alpha^{2}\sigma^{2}}.\]
\end{prop}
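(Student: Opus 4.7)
My plan is to expand $\mathbb{E}(e^{\alpha X})$ through the auxiliary quantity $\mathbb{E}\cosh(\alpha X)$ (which isolates the even moments of $X$, where the tail hypothesis is easy to use), and to invoke the centering $\mathbb{E}X=0$ exactly once at the end via a single Jensen step.

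First, a clean moment bound on $X$. By the layer-cake formula together with the hypothesis, for every integer $m\ge 1$,
\[
\mathbb{E}X^{2m} \;=\; 2m\int_0^\infty r^{2m-1}\mathbb{P}(|X|\ge r)\,dr \;\le\; 4m\int_0^\infty r^{2m-1}e^{-r^2/(2\sigma^2)}\,dr.
\]
The substitution $t=r^2/(2\sigma^2)$ identifies the right-hand integral with a Gamma function, yielding the very clean estimate $\mathbb{E}X^{2m}\le 2\cdot m!\cdot(2\sigma^2)^m$. Plugging this bound into the power series $\mathbb{E}\cosh(\alpha X)=\sum_{m\ge 0}\alpha^{2m}\mathbb{E}X^{2m}/(2m)!$ and using the elementary combinatorial inequality $\binom{2m}{m}\ge 2^m$ (equivalently $(2m)!\ge 2^m(m!)^2$) collapses the resulting double series into a single exponential:
\[
\mathbb{E}\cosh(\alpha X) \;\le\; 1+2\sum_{m=1}^\infty\frac{(2\alpha^2\sigma^2)^m\, m!}{(2m)!} \;\le\; 1+2\sum_{m=1}^\infty\frac{(\alpha^2\sigma^2)^m}{m!} \;=\; 2e^{\alpha^2\sigma^2}-1.
\]

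To finish, I would write the identity $\mathbb{E}(e^{\alpha X})=2\mathbb{E}\cosh(\alpha X)-\mathbb{E}(e^{-\alpha X})$ and apply Jensen's inequality, $\mathbb{E}(e^{-\alpha X})\ge e^{-\alpha\mathbb{E}X}=1$, to deduce $\mathbb{E}(e^{\alpha X})\le 4e^{\alpha^2\sigma^2}-3$. The desired bound then reduces to the elementary inequality $4e^y-3\le e^{4y}$ for $y\ge 0$, which follows by noting that $f(y):=e^{4y}-4e^y+3$ satisfies $f(0)=0$ and $f'(y)=4(e^{4y}-e^y)\ge 0$; applying this with $y=\alpha^2\sigma^2$ gives the proposition. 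There is no real obstacle in this argument; the only conceptual point worth emphasising is that the hypothesis $\mathbb{E}X=0$ must be used somewhere, since without it the linear term in the Taylor expansion around $\alpha=0$ would rule out any bound of the form $e^{c\alpha^2\sigma^2}$ for small $\alpha>0$. In my plan it enters exactly once, in the Jensen step just above.
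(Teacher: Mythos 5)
Your argument is correct and complete. The paper itself offers no proof of Proposition \ref{proposition::subgaussian} --- it simply cites a standard reference --- so there is nothing to compare against except the textbook literature, where your route (tail bound $\Rightarrow$ even moments $\Rightarrow$ $\mathbb{E}\cosh$ $\Rightarrow$ MGF) is the classical one. All the individual steps check out: the layer-cake identity with the substitution $t=r^{2}/(2\sigma^{2})$ does give $\mathbb{E}X^{2m}\le 2\,m!\,(2\sigma^{2})^{m}$; the term-by-term bound on the $\cosh$ series is legitimate by Tonelli since every summand is nonnegative, and $\binom{2m}{m}\ge 2^{m}$ collapses it to $2e^{\alpha^{2}\sigma^{2}}-1$; the identity $e^{\alpha X}=2\cosh(\alpha X)-e^{-\alpha X}$ together with Jensen (valid because the tail bound makes $X$ integrable and $\mathbb{E}X=0$ by hypothesis) yields $\mathbb{E}e^{\alpha X}\le 4e^{\alpha^{2}\sigma^{2}}-3$; and the elementary inequality $4e^{y}-3\le e^{4y}$ for $y\ge 0$ closes the gap to the stated constant. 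Your remark that centering must enter somewhere is exactly right, and routing it through the single Jensen step on $\mathbb{E}e^{-\alpha X}$ is a clean way to do it (the more common alternative is to bound the odd-moment contributions directly, which is messier). No gaps.
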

\begin{rk}
If $X\sim\mathscr{N}(\mu,\sigma^{2})$ is a Gaussian random variable, then we always have
\[\mathbb{E}\left(e^{\alpha X}\right)=e^{\alpha\mu+\frac{\alpha^{2}\sigma^{2}}{2}}.\]
Combining this with (\ref{equation::variance}) implies that when $\epsilon$ is small enough, we have $\mathbb{E}e^{\bar{h}_{\epsilon}(z)}=C(z,\;D)^{\frac{\gamma^{2}}{2}}$.
\end{rk}

\subsection{de Branges's Theorem}\label{section::debr}
In this section, we state de Branges's theorem which is significant in the proof of Theorem \ref{thm::conformalvariance}.
\begin{thm}[de Branges's theorem]\label{theorem::debran}
Let 
\[S:=\{\phi:\mathbb{D}\rightarrow\mathbb{C} \textrm{ is analytic and one-to-one on the unit disk}, \phi(0)=0, \phi'(0)=1\}\] 
be the set of schlicht functions. For any $\phi\in S$, we consider the Taylor expansion at 0,
\[f(z)=z+\sum_{n=2}^{\infty}a_{n}z^{n}.\]
Then we have $|a_{n}|\le n$ for all $n\ge 2$.
\end{thm}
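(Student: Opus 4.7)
The plan is to follow de Branges's celebrated argument. First, one reduces to \emph{single-slit} mappings: the set of $\phi \in S$ whose image is $\mathbb{C}$ minus a Jordan arc extending to $\infty$ is dense in $S$ in the topology of locally uniform convergence, and each $a_n$ depends continuously on $\phi$ in that topology, so it suffices to prove $|a_n|\le n$ for slit mappings. For such a $\phi$, embed it in a Loewner chain $\{f_t\}_{t\ge 0}$ with $f_t(z) = e^t z + \cdots$ univalent on $\mathbb{D}$, related to $\phi$ by the usual rescaling at $t = 0$, and satisfying Loewner's PDE
\[
\frac{\partial f_t(z)}{\partial t} \;=\; z\,\frac{1+\kappa(t)z}{1-\kappa(t)z}\,\frac{\partial f_t(z)}{\partial z},
\]
where $\kappa:[0,\infty)\to\partial\mathbb{D}$ is continuous. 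Expanding $\log(f_t(z)/z) - t = 2\sum_{k\ge 1} c_k(t)\, z^k/k$ converts Loewner's PDE into an explicit infinite system of ODEs for the logarithmic coefficients $c_k(t)$.

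Next, for each fixed target index $n$, I would introduce de Branges's explicit weight functions $\tau_1^n(t),\ldots,\tau_n^n(t)$, constructed from Jacobi polynomials with suitable parameters multiplied by exponential decay factors, and consider the functional
\[
\Phi_n(t) \;:=\; \sum_{k=1}^{n}\tau_k^n(t)\left(k|c_k(t)|^2 - \frac{4}{k}\right).
\]
Using the ODEs for the $c_k$, one differentiates $\Phi_n$ and tries to show $\Phi_n'(t)\ge 0$. The weights are designed so that $\lim_{t\to\infty}\Phi_n(t) = 0$; combined with monotonicity this gives $\Phi_n(0)\le 0$, which is exactly Milin's conjecture for the logarithmic coefficients of $\phi$. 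The Lebedev--Milin exponentiation inequality then converts Milin's bound into the coefficient bound $|a_n|\le n$.

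The main obstacle, and the only truly deep analytic input, is the nonnegativity $\Phi_n'(t)\ge 0$. After substituting the Loewner ODE and grouping terms, this reduces to the \emph{Askey--Gasper inequality}: a certain explicit sum of Jacobi polynomials is pointwise nonnegative on $[-1,1]$. I would invoke Askey and Gasper's hypergeometric proof for this positivity; every other ingredient (reduction to slit mappings, Loewner theory, the design of the weights $\tau_k^n$, and the Lebedev--Milin inequality) is technically involved but conceptually part of standard univalent function theory, so the burden of the proof concentrates entirely on the Askey--Gasper positivity.
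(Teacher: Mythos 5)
The paper does not prove this statement: Theorem~\ref{theorem::debran} is quoted as a classical background result (de Branges's 1984 resolution of the Bieberbach conjecture) and used as a black box in the proof of Lemma~\ref{lemma::unicon}, where only the coefficient bounds $|a_n|\le n$ and the resulting domination by the Koebe function's derivatives are needed. So there is no ``paper's proof'' to compare against; what you have produced is a sketch of de Branges's own argument, and as such it correctly identifies the essential architecture: reduction to single-slit maps by density and continuity of the Taylor coefficients, embedding in a Loewner chain and passing to the logarithmic coefficients $c_k(t)$, the weighted functional $\Phi_n$ built from Jacobi-polynomial weights whose monotonicity yields Milin's conjecture at $t=0$, the Lebedev--Milin exponentiation step to pass from Milin to Bieberbach, and the Askey--Gasper positivity as the one deep analytic input that makes $\Phi_n'\ge 0$. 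Two caveats worth registering. First, this is an outline rather than a proof: every step you list (the density argument, the ODE system for the $c_k$, the precise construction of the weights $\tau_k^n$, the algebra that converts $\Phi_n'\ge 0$ into Askey--Gasper, and Askey--Gasper itself) is substantial, and the normalization of the logarithmic coefficients and of the Milin functional should be pinned down since the constant ($1/k$ versus $4/k$) depends on whether one writes $\log(f_t(z)/z)=2\sum\gamma_k z^k$ or a variant. Second, if one only needs $|a_n|\le n$ qualitatively (as this paper does, for a uniform $O(\epsilon)$ bound in Lemma~\ref{lemma::unicon}), the much more elementary Littlewood bound $|a_n|<en$ would suffice and avoids the entire de Branges machinery; the sharp constant is not actually used.
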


\section{Proofs of the main results}\label{section::proof}
\subsection{Proof of Theorem \ref{theorem::measureconvergence}}\label{section::circ}
First, we generalize the convergence of $\mu_{\epsilon}$ from along negative powers of $2$ to negative powers of $2^{\frac{1}{N}}$ for any positive integer $N$.
\begin{lem}\label{lemma::Nconvergence}
Fix $\gamma\in[0,2)$ and define $D, h,\mu_{\epsilon},\mu$ as in Section \ref{section::definition}. Then it is almost surely the case that for each positive integer $N$, as $\epsilon\rightarrow 0$ along negative powers of $2^{\frac{1}{N}}$, the measures $\mu_{\epsilon}$ converge weakly in $D$ to $\mu$.
\end{lem}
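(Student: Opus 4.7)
The plan is to reduce to the Duplantier--Sheffield a.s.\ convergence along $\epsilon = 2^{-m}$, by decomposing the set of negative powers of $2^{1/N}$ into $N$ arithmetic progressions in $\log$-scale and running essentially the same martingale argument on each one. Fix $N\ge 1$ and observe that each negative power of $2^{1/N}$ can be written uniquely as $\epsilon_{m,k}:=2^{-k/N}\cdot 2^{-m}$ with $m\in\mathbb{Z}_{\ge 0}$ and $k\in\{0,1,\ldots,N-1\}$. If, for each fixed $k$, we show a.s.\ weak convergence of $\mu_{\epsilon_{m,k}}$ to $\mu$ as $m\to\infty$, then intersecting the $N$ almost sure events yields the conclusion for this $N$, and a further countable intersection over $N$ completes the proof.

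The argument for each pair $(N,k)$ is a shifted repetition of the one in \cite{DuplantierSheffieldLQGKPZ}. For a nonnegative $\psi\in C_c(D)$ with compact support $K\subset D$, Proposition \ref{prop::circleaverage}(4) writes $\gamma h_{e^{-t}}(z)-\tfrac{\gamma^2}{2}t$, for $t\ge t_0^z$, as an initial contribution depending only on $h_{e^{-t_0^z}}(z)$ plus $\gamma V_{t-t_0^z}-\tfrac{\gamma^2}{2}(t-t_0^z)$, the log of Doob's exponential martingale. Since $t_0(K):=\sup_{z\in K}t_0^z$ is finite by compactness of $K$, integrating $\psi(z)e^{\bar h_{e^{-t}}(z)}\,dz$ over $K$ and applying Fubini shows that $t\mapsto N_t^\psi:=\int\psi\,d\mu_{e^{-t}}$ is a nonnegative continuous martingale on $[t_0(K),\infty)$. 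In particular, along $t_m:=(m+k/N)\log 2$, the discrete-time martingale $N_{t_m}^\psi=\int\psi\,d\mu_{\epsilon_{m,k}}$ converges a.s.\ to some limit $L^{\psi,k}$. To identify $L^{\psi,k}=\int\psi\,d\mu$ a.s., I would invoke the convergence in probability of $\mu_\epsilon$ to $\mu$ recalled in Section \ref{section::definition}: since a.s.\ convergence implies convergence in probability, the a.s.\ limit $L^{\psi,k}$ must agree with the probability limit $\int\psi\,d\mu$. A countable intersection of null sets over $\psi$ ranging in a countable dense subfamily of $C_c^+(D)$ and over $k,N$, combined with the fact that weak convergence can be tested against such a dense family (the uniform-in-$m$ boundedness of total masses on compacts supplied by the martingale property providing the requisite tightness), then delivers the lemma.

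The step demanding the most care is the martingale argument itself, because the Brownian-motion representation of Proposition \ref{prop::circleaverage}(4) only starts at the $z$-dependent time $t_0^z$; the restriction to $t\ge t_0(K)$ and the precise choice of filtration (generated by the circle averages of $h$ on $K$ at scales $\ge e^{-t}$) must be set up so that $N_t^\psi$ really is a martingale, not merely a supermartingale, and the limit $L^{\psi,k}$ is finite and nontrivial. A secondary subtle point is the identification step: if the convergence-in-probability statement for the circle-average regularization is not directly available in the literature for the full range $\gamma\in[0,2)$, one can instead perform a direct $L^2$ comparison $\mathbb{E}\bigl[\bigl(\int\psi\,d\mu_{\epsilon_{m,k}}-\int\psi\,d\mu_{\epsilon_{m,0}}\bigr)^2\bigr]\to 0$ using (\ref{equation::variance}) and (\ref{eqaution::samecenter}) in the $L^2$ regime $\gamma<\sqrt{2}$, and extend to all $\gamma\in[0,2)$ by the standard truncation procedure.
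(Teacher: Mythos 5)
Your reduction to $N$ arithmetic progressions $\epsilon_{m,k}=2^{-k/N}2^{-m}$, $k=0,\ldots,N-1$, followed by a countable intersection over $(N,k)$, is precisely the framing the paper uses. The difficulty is your primary argument for a fixed progression: the claim that $t\mapsto N_t^\psi=\int\psi\,d\mu_{e^{-t}}$ is a martingale is not correct, and this is not a technicality but a genuine structural obstruction. Proposition~\ref{prop::circleaverage}(4) supplies a Brownian motion $V^z_\cdot$ for each \emph{fixed} $z$, but the increments for different $z$ are correlated: if $\epsilon'<\epsilon$, then
\[
\mathrm{cov}\bigl(h_{\epsilon'}(z)-h_\epsilon(z),\,h_\epsilon(w)\bigr)
=\int_{\partial B_{\epsilon'}(z)}\xi_\epsilon^w\,d\sigma-\int_{\partial B_\epsilon(z)}\xi_\epsilon^w\,d\sigma,
\]
and since $\xi_\epsilon^w$ is superharmonic (its distributional Laplacian is $-2\pi$ times the uniform measure on $\partial B_\epsilon(w)$), its circle averages decrease in the radius, so this covariance is strictly positive whenever $B_\epsilon(z)\cap B_\epsilon(w)\neq\emptyset$. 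Thus the increment $h_{e^{-t}}(z)-h_{e^{-s}}(z)$ is \emph{not} independent of the $\sigma$-field generated by the scale-$\ge e^{-s}$ circle averages on a neighbourhood of $z$; its conditional expectation given $\mathcal{F}_s$ is a genuinely random quantity, not zero. Consequently $\mathbb{E}[e^{\bar h_{e^{-t}}(z)}\mid\mathcal{F}_s]\neq e^{\bar h_{e^{-s}}(z)}$, and integrating in $z$ via Fubini cannot restore the martingale property because the failure is pointwise. There is no natural filtration making $N_t^\psi$ a martingale for the circle-average regularization, which is exactly why the paper (following \cite{DuplantierSheffieldLQGKPZ}) never asserts one.

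Your fallback $L^2$ comparison is the right route and is in essence what the paper's proof of Lemma~\ref{lemma::Nconvergence} does, but two points need sharpening. First, ``$\to 0$'' is not enough: for the Borel--Cantelli argument you need \emph{summable} decay in $m$, which the paper obtains as $\mathbb{E}|A_k^y-B_k^y|^2\asymp 2^{(\gamma^2-2)k}$. Second, the paper does not compute $\mathbb{E}\bigl[(\int\psi\,d\mu_{\epsilon_{m,k}}-\int\psi\,d\mu_{\epsilon_{m,0}})^2\bigr]$ directly; instead it places a shifted lattice $S_k^y$ of spacing $2^{-k}$ inside $S$, conditions on the coarse circle averages $h_{2^{-k-1}}(z)$ to make the fine increments at distinct lattice points conditionally \emph{independent} (using the Markov property, which requires the disjointness of the balls $B_{2^{-k-1}}(z)$), and then averages over the shift $y$ with Jensen. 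This lattice-and-average device is precisely what sidesteps the cross-correlations that also kill the martingale. Your truncation remark for $\sqrt2\le\gamma<2$ matches the paper's treatment with the threshold set $\tilde S_k^y$ in (\ref{equation::tildes}). So the fallback is essentially the right proof, but it should be stated with the lattice decomposition, conditional-independence step, and exponential (not merely vanishing) second-moment bound made explicit.
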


\begin{proof}
It suffices to show that for any integer $N\ge 2$ and $n\in\{1,2,\ldots, N-1\}$, it is almost surely the case that the measures $\mu_{2^{-\frac{n}{N}-k}}$ converge weakly in $D$ to $\mu$ as $k\rightarrow\infty$. We will use the same method as in \cite[Section 3.2]{DuplantierSheffieldLQGKPZ}. It is easy to see that if for each dyadic square $S$ compactly supported on $D$, the random variables $\mu_{2^{-\frac{n}{N}-k}}(S)$ a.s.\ converge to $\mu(S)$ as $k\rightarrow\infty$, then the desired result follows. Without loss of generality, we assume that $S$ is the unit square $[0,1]^{2}$.

For $y=(y_{1}, y_{2})\in (0,1)^{2}$ and $k\ge 1$, let $S_{k}^{y}$ be the discrete set of $2^{2k}$ points $(a,b)\in S$ with the property that $(2^{k}a-2^{k}y_{1}, 2^{k}b-2^{k}y_{2})\in\mathbb{Z}^{2}$. Define
\[A_{k}^{y}:=2^{-2k}\sum_{z\in S_{k}^{y}}\exp \bar{h}_{2^{-k-1}}(z), \quad B_{k}^{y}:=2^{-2k}\sum_{z\in S_{k}^{y}}\exp \bar{h}_{2^{-\frac{n}{N}-k-1}}(z).\]
Assume that $k$ is large enough such that $\textrm{dist}(S,\partial D)>2^{-k-1}$. By the Markov property of the GFF  and Proposition \ref{prop::circleaverage}, we have that conditioned on the values of $h_{2^{-k-1}}(z)$ for $z\in S_{k}^{y}$, the random variables $h_{2^{-\frac{n}{N}-k-1}}(z)$ are independent of one another and each is a Gaussian random variable with mean $h_{2^{-k-1}}(z)$ and variance $\frac{n}{N}\log 2$. Hence, given the values of $h_{2^{-k-1}}(z)$ for $z\in S_{k}^{y}$, the conditional expectation of $\left|A_{k}^{y}-B_{k}^{y}\right|^{2}$ is 
\begin{equation}\label{equation::uncon}
\mathbb{E}\left(\left.\left|A_{k}^{y}-B_{k}^{y}\right|^{2}\right|h_{2^{-k-1}}(z), z\in S_{k}^{y}\right)=2^{-4k}\tilde{C}\sum_{z\in S_{k}^{y}}e^{2\bar{h}_{2^{-k-1}}(z)},
\end{equation}
where 
\[\tilde{C}=\mathbb{E}\left(\left.\left|1-2^{-\frac{n\gamma^{2}}{2N}}\exp\left[\gamma h_{2^{-\frac{n}{N}-k-1}}(z)\right]\right|^{2}\right|h_{2^{-k-1}}(z)=0\right)=e^{\frac{n\gamma^{2}}{N}}-1\]
is a constant independent of $k$ and $z$.

And the unconditional expectation is\footnote{For $f,g>0$, we write $f\asymp g$ if there exists a constant $c\ge 1$ such that $c^{-1}f(x)\le g(x)\le cf(x)$ for all $x$. We write $f\lesssim g$ if there exists a constant $c>0$ such that $f(x)\le cg(x)$ and $f\gtrsim g$ if $g\lesssim f$.}
\[\mathbb{E}\left|A_{k}^{y}-B_{k}^{y}\right|^{2}=\tilde{C}2^{-4k}2^{(k+1)\gamma^{2}}\sum_{z\in S_{k}^{y}}C(z;D)^{2\gamma^{2}}\asymp 2^{(\gamma^{2}-2)k},\]
where the constant in $\asymp$ is independent of $k$ and $y$.

Note that $\mu_{2^{-k-1}}(S)$ is the mean value of $A_{k}^{y}$ over $y\in[0,1]^{2}$ and $\mu_{2^{-\frac{n}{N}-k-1}}(S)$ is the mean value of $B_{k}^{y}$ over $y\in[0,1]^{2}$. By Jensen's inequality, when $0\le\gamma^{2}<2$, $\mathbb{E}\left|\mu_{2^{-k-1}}(S)-\mu_{2^{-\frac{n}{N}-k-1}}(S)\right|^{2}$ decays exponentially in $k$. The desired result thus follows from the Borel-Cantelli lemma and the almost sure convergence of $\mu_{2^{-k-1}}(S)$ to $\mu(S)$ as $k\rightarrow\infty$.

The case for $\sqrt{2}\le\gamma<2$ can be proved as in \cite[Section 3.2]{DuplantierSheffieldLQGKPZ} by breaking the sum over $z\in S^{y}_{k}$ into two parts. To be self-contained, we include the proof in this paper. Fix $\alpha\in(\gamma,2\gamma)$. Let 
\begin{equation}\label{equation::tildes}
\tilde{S}_{k}^{y}:=\left\{z\in S_{k}^{y}: h_{2^{-k-1}}(z)>-\alpha\log\left(2^{-k-1}/C(z;D)\right)\right\}.
\end{equation}
Define
\[\tilde{A}_{k}^{y}:=2^{-2k}\sum_{z\in S_{k}^{y}}1_{\{z\in\tilde{S}_{k}^{y}\}}\exp \bar{h}_{2^{-k-1}}(z),\quad\tilde{B}_{k}^{y}:=2^{-2k}\sum_{z\in S_{k}^{y}}1_{\{z\in\tilde{S}_{k}^{y}\}}\exp \bar{h}_{2^{-\frac{n}{N}-k-1}}(z).\]
We claim that $\mathbb{E}\tilde{A}_{k}$ and $\mathbb{E}\tilde{B}_{k}$ converge to 0 exponentially in $k$. By Proposition \ref{prop::circleaverage}, the random variable $h_{2^{-k-1}}(z)$ is a centered Gaussian with variance 
\[\sigma^{2}=-\log\left(2^{-k-1}/C(z;D)\right).\] 
Therefore
\begin{eqnarray*}
\mathbb{E}1_{\{z\in\tilde{S}_{k}^{y}\}}\exp\bar{h}_{2^{-k-1}}(z)&=&\frac{2^{-(k+1)\gamma^{2}/2}}{\sqrt{2\pi}\sigma}\int_{\alpha\sigma^{2}}^{\infty}e^{\gamma\eta}e^{-\frac{\eta^{2}}{2\sigma^{2}}}d\eta\\
&=&\frac{\int_{\alpha\sigma^{2}}^{\infty}e^{\gamma\eta}e^{-\frac{\eta^{2}}{2\sigma^{2}}}d\eta}{\int_{-\infty}^{\infty}e^{\gamma\eta}e^{-\frac{\eta^{2}}{2\sigma^{2}}}d\eta}\mathbb{E}e^{\bar{h}_{\epsilon}(z)}\\
&=&\mathbb{P}(X>\alpha\sigma^{2})C(z;D)^{\frac{\gamma^{2}}{2}}\quad (\textrm{random variable }X\sim \mathscr{N}(\gamma\sigma^{2},\sigma^{2}))\\
&\lesssim&2^{-\frac{(\alpha-\gamma)^{2}}{2}k}.\\
\end{eqnarray*}
We also have
\begin{eqnarray*}
\mathbb{E}1_{\{z\in\tilde{S}_{k}^{y}\}}\exp\bar{h}_{2^{-\frac{n}{N}-k-1}}(z)&=&\mathbb{E}\left[1_{\{z\in\tilde{S}_{k}^{y}\}}\mathbb{E}\left[\exp\bar{h}_{2^{-\frac{n}{N}-k-1}}(z)\left|h_{2^{-k-1}}(z)\right.\right]\right]\\
&=&\mathbb{E}1_{\{z\in\tilde{S}_{k}^{y}\}}\exp\bar{h}_{2^{-k-1}}(z).\\
\end{eqnarray*}
Therefore $\mathbb{E}\tilde{B}_{k}^{y}=\mathbb{E}\tilde{A}_{k}^{y}\lesssim 2^{-\frac{(\alpha-\gamma)^{2}}{2}k}$, where the constant in $\lesssim$ is independent of $k$ and $y$. Moreover, it follows from the argument before (\ref{equation::uncon}) that
\begin{eqnarray*}
\lefteqn{\mathbb{E}|(B_{k}^{y}-\tilde{B}_{k}^{y})-(A_{k}^{y}-\tilde{A}_{k}^{y})|^{2}}\\
&=&2^{-4k}\mathbb{E}\left|\sum_{z\in S_{k}^{y}\backslash\tilde{S}_{k}^{y}}\exp \bar{h}_{2^{-k-1}}(z)-\exp \bar{h}_{2^{-\frac{n}{N}-k-1}}(z)\right|^{2}\\
&=&2^{-4k}\tilde{C}\sum_{z\in S_{k}^{y}}\mathbb{E}1_{\{z\in S_{k}^{y}\backslash\tilde{S}_{k}^{y}\}}e^{2\bar{h}_{2^{-k-1}}(z)}\\
&=&2^{-4k}\tilde{C}\sum_{z\in S_{k}^{y}}\frac{\int_{-\infty}^{\alpha\sigma^{2}}e^{2\gamma\eta}e^{-\frac{\eta^{2}}{2\sigma^{2}}}d\eta}{\int_{-\infty}^{\infty}e^{2\gamma\eta}e^{-\frac{\eta^{2}}{2\sigma^{2}}}d\eta}\mathbb{E}e^{2\bar{h}_{2^{-k-1}}(z)}\\
&\lesssim&2^{-(2-\gamma^{2})k}\mathbb{P}(X<\alpha\sigma^{2})\quad(\textrm{random variable }X\sim \mathscr{N}(2\gamma\sigma^{2},\sigma^{2}))\\
&\lesssim&2^{-\left(2-\gamma^{2}+(2\gamma-\alpha)^{2}/2\right)k},\\
\end{eqnarray*}
where the constant in $\lesssim$ is independent of $k$ and $y$.

Note that for a given $\gamma<2$, when $\alpha\in(\gamma,2\gamma)$ is very close to $\gamma$, the exponent becomes close to $2-\frac{\gamma^{2}}{2}>0$. Therefore, we can choose $\alpha$ small enough to make the exponent positive. Therefore the Cauchy-Schwarz inequality and the triangle inequality imply that $|\mathbb{E}A_{k}^{y}-\mathbb{E}B_{k}^{y}|$ decays to 0 exponentially in $k$ and the desired result follows.
\end{proof}
Now we prove Theorem \ref{theorem::measureconvergence}.
\begin{proof}[Proof of Theorem \ref{theorem::measureconvergence}]
For each positive integer $N$, define 
\[\overline{C}(N):=\left(\mathbb{E}\sup_{t\in[0,\frac{\log 2}{N}]}e^{-\frac{\gamma^{2}t}{2}}e^{\gamma B_{t}}\right)^{-1},\]
and
\[\underline{C}(N):=\left(\mathbb{E}\inf_{t\in[0,\frac{\log 2}{N}]}e^{-\frac{\gamma^{2}t}{2}}e^{\gamma B_{t}}\right)^{-1},\]
where $B_{t}$ is a standard one dimensional Brownian motion with $B_{0}=0$.

Moreover, given $N$, for each integer $k\ge 1$, define the random measures
\[\overline{\mu}_{k,N}=\overline{C}(N)\sup_{\epsilon\in[2^{-(k+1)/N},2^{-k/N}]}e^{\bar{h}_{\epsilon}(z)}dz,\]
and
\[\underline{\mu}_{k,N}=\underline{C}(N)\inf_{\epsilon\in[2^{-(k+1)/N},2^{-k/N}]}e^{\bar{h}_{\epsilon}(z)}dz.\]
From Proposition \ref{prop::circleaverage}, it is easy to see that the value of $\overline{C}(N)$ is chosen so that if $B_{2^{-k/N}}(z)\subset D$, we have
\begin{eqnarray*}
\lefteqn{\mathbb{E}\left(\left.\overline{C}(N)\sup_{\epsilon\in[2^{-(k+1)/N},2^{-k/N}]}e^{\bar{h}_{\epsilon}(z)}\right|h_{2^{-k/N}}(z)\right)}\\
&=&e^{\bar{h}_{2^{-k/N}}(z)}\overline{C}(N)\mathbb{E}\left(\left.\sup_{\epsilon\in[2^{-(k+1)/N},2^{-k/N}]}e^{\bar{h}_{\epsilon}(z)-\bar{h}_{2^{-k/N}}(z)}\right|h_{2^{-k/N}}(z)\right)\\
&=&e^{\bar{h}_{2^{-k/N}}(z)}\overline{C}(N)\mathbb{E}\left(\sup_{t\in[0,\frac{\log 2}{N}]}e^{-\frac{\gamma^{2}t}{2}}e^{\gamma B_{t}}\right)\\
&=&e^{\bar{h}_{2^{-k/N}}(z)}.\\
\end{eqnarray*}
Therefore, by estimating $\mathbb{E}\left|\mu_{2^{-k/N}}(S)-\overline{\mu}_{k,N}(S)\right|^{2}$ using the tower property of conditional expectation as in the proof of Lemma $\ref{lemma::Nconvergence}$, it is easy to see that for each $N$, it is almost surely the case that the measures $\overline{\mu}_{k,N}$ converge weakly to $\mu$ as $k\rightarrow\infty$. Similarly, we can prove that the same result holds for $\underline{\mu}_{k, N}$. The monotone convergence theorem implies that both $\overline{C}(N)$ and $\underline{C}(N)$ converge to 1 as $N\rightarrow\infty$. The desired result thus follows.
\end{proof}

\subsection{Proof of Theorem \ref{prop::convergence}}\label{section::bump}
We first recall the setup described in Section \ref{section::definition}. Suppose that $f:\mathbb{R}^{2}\rightarrow\mathbb{R}_{\ge 0}$ is a radially symmetric bump function compactly supported on $B_{1}(0)$ with \[\int_{B_{1}(0)}f(z)dz=1.\]
For $0<\epsilon<1$, define 
\begin{equation}\label{equation::bump}
f_{\epsilon}(z):=\frac{1}{\epsilon^{2}}f\left(\frac{z}{\epsilon}\right).
\end{equation}
We occasionally abuse notation and by writing $f(r)$ for $r\ge 0$ we mean $f((r,0))$. Then the convolution of the GFF with $f_{\epsilon}$ becomes
\[h\ast f_{\epsilon}(z)=(h, f_{\epsilon}(z-\cdot))=2\pi\int_{0}^{\epsilon}h_{r}(z)f_{\epsilon}(r)rdr.\]
If $B_{\epsilon}(z)\subset D$, it follows from (\ref{equation::variance}) that $h\ast f_{\epsilon}(z)$ is a Gaussian random variable with mean zero and variance
\begin{eqnarray*}
\textrm{var}\left(h\ast f_{\epsilon}(z)\right)&=&4\pi^{2}\iint_{[0,\epsilon]^{2}}\mathbb{E}\left(h_{x}(z)h_{y}(z)\right)f_{\epsilon}(x)f_{\epsilon}(y)xydxdy\\
&=&4\pi^{2}\iint_{[0,\epsilon]^{2}}\left[-\log(x\vee y)+\log C(z;D)\right]f_{\epsilon}(x)f_{\epsilon}(y)xydxdy\\
&=&-C-\log\epsilon+\log C(z;D),\\
\end{eqnarray*}
where 
\begin{equation}\label{equation::defC}
C=4\pi^{2}\iint_{[0,1]^{2}}\log(x\vee y)f(x)f(y)xydxdy<0
\end{equation}
is a constant.

Throughout the remainder of this article, we fix the bump function $f$ and define
\[\tilde{h}_{\epsilon}(z):=\gamma h\ast f_{\epsilon}(z)+\frac{\gamma^{2}}{2}(\log\epsilon+C),\]
and
\begin{equation*}
\tilde{\mu}_{\epsilon}:=e^{\tilde{h}_{\epsilon}(z)}dz.
\end{equation*}
It is easy to check that the definition of $\tilde{\mu}_{\epsilon}$ as above coincides with the one given by (\ref{equation::approximate}).
\begin{proof}[Proof of Theorem \ref{prop::convergence}]
Note that if $B_{\epsilon}(z)\subset D$, we have
\[\mathbb{E}\left(\left.\exp\left(\tilde{h}_{\epsilon}(z)\right)\right|h_{\epsilon}(z)\right)=\exp\left(\bar{h}_{\epsilon}(z)\right),\]
and  
\[\mathbb{E}\left(\left.\left|\exp\left(\tilde{h}_{\epsilon}(z)\right)-\exp\left(\bar{h}_{\epsilon}(z)\right)\right|^{2}\right|h_{\epsilon}(z)\right)=\tilde{C}\exp\left(2\bar{h}_{\epsilon}(z)\right),\]
where 
\[\tilde{C}=\mathbb{E}\left(\left.\left|1-e^{\frac{C\gamma^{2}}{2}+\gamma h*f_{\epsilon}(z)}\right|^{2}\right|h_{\epsilon}(z)=0\right)=e^{-C\gamma^{2}}-1\]
is a constant independent of $\epsilon$ and $z$.

For each positive integer $N$, by estimating $\mathbb{E}\left|\tilde{\mu}_{2^{-k/N}}(S)-\mu_{2^{-k/N}}(S)\right|^{2}$ as in the proof of Lemma \ref{lemma::Nconvergence}, one can show that it is almost surely the case that the measures $\tilde{\mu}_{2^{-k/N}}$ converge weakly in $D$ to $\mu$ as $k\rightarrow\infty$.

Let 
\[\tilde{C}(N):=\frac{\mathbb{E}\exp\left[2\pi\gamma\int_{0}^{\infty}B_{t}f(e^{-t})e^{-2t}dt\right]}{\mathbb{E}\exp\left[2\pi\gamma\sup_{\Delta\in[0,\frac{\log 2}{N}]}\int_{0}^{\infty}\left(B_{t+\Delta}-\frac{\gamma\Delta}{2}\right)f(e^{-t})e^{-2t}dt\right]},\]
and
\[\uwave{C}(N):=\frac{\mathbb{E}\exp\left[2\pi\gamma\int_{0}^{\infty}B_{t}f(e^{-t})e^{-2t}dt\right]}{\mathbb{E}\exp\left[2\pi\gamma\inf_{\Delta\in[0,\frac{\log 2}{N}]}\int_{0}^{\infty}\left(B_{t+\Delta}-\frac{\gamma\Delta}{2}\right)f(e^{-t})e^{-2t}dt\right]},\]
where $B_{t}$ is a standard one dimensional Brownian motion with $B_{0}=0$.

From Proposition \ref{prop::circleaverage}, it is easy to see that the values of $\tilde{C}(N)$ and $\uwave{C}(N)$ are chosen so that if $B_{2^{-k/N}}(z)\subset D$, we have
\[\mathbb{E}\left(\left.\tilde{C}(N)\sup_{\epsilon\in[2^{-(k+1)/N},2^{-k/N}]}\exp\left(\tilde{h}_{\epsilon}(z)\right)\right|h_{2^{-k/N}}(z)\right)=\exp\left(\bar{h}_{2^{-k/N}}(z)\right),\]
and
\[\mathbb{E}\left(\left.\uwave{C}(N)\inf_{\epsilon\in[2^{-(k+1)/N},2^{-k/N}]}\exp\left(\tilde{h}_{\epsilon}(z)\right)\right|h_{2^{-k/N}}(z)\right)=\exp\left(\bar{h}_{2^{-k/N}}(z)\right).\]

Therefore, as in the proof of Theorem \ref{theorem::measureconvergence}, one can show that for each positive integer $N$, it is almost surely the case that the following two sequences of random measures
\[\tilde{C}(N)\sup_{\epsilon\in[2^{-(k+1)/N},2^{-k/N}]}\exp\left(\tilde{h}_{\epsilon}(z)\right)dz\]
and
\[\uwave{C}(N)\inf_{\epsilon\in[2^{-(k+1)/N},2^{-k/N}]}\exp\left(\tilde{h}_{\epsilon}(z)\right)dz\]
both converge weakly to $\mu$ as $k\rightarrow\infty$.

It is easy to check that by the dominated convergence theorem, we have that both $\tilde{C}(N)$ and $\uwave{C}(N)$ converge to $1$ as $N\rightarrow\infty$. The desired result thus follows.
\end{proof}
Using a similar argument, we are able to generalize Theorem \ref{prop::convergence} in such a way that in the definition of the approximating measures, the $\epsilon$ in $\tilde{h}_{\epsilon}$ varies continuously with respect to the point $z$:
\begin{cor}\label{lemma::differentradius}
Fix $\gamma\in[0,2)$ and assume the same notations as in Theorem \ref{prop::convergence}. Suppose that $g: D\rightarrow\mathbb{R}_{> 0}$ is a smooth function on $D$. Then it is almost surely the case that as $\epsilon\rightarrow 0$, the measures $e^{\tilde{h}_{\epsilon g(z)}(z)}dz$ converge weakly in $D$ to $\mu$.
\end{cor}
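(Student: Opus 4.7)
The plan is to carry out the two-level argument used in the proofs of Theorem \ref{theorem::measureconvergence} and Theorem \ref{prop::convergence} with the constant mollifier radius $\epsilon$ replaced by the $z$-dependent radius $\epsilon g(z)$. As a preliminary reduction, since weak convergence in $D$ is tested against $C_c^{\infty}(D)$, and since $g$ is continuous and strictly positive, I may fix a dyadic square $S$ compactly contained in $D$ and assume that $g$ is bounded between two constants $0 < g_{\min} \leq g_{\max}$ on $S$, so that for small $\epsilon$ the radii $\epsilon g(z)$ all lie in a single compact subinterval of $(0, \operatorname{dist}(S, \partial D))$.

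First I would prove the analog of Lemma \ref{lemma::Nconvergence} along negative powers of $2^{1/N}$: for each positive integer $N$, almost surely $\nu_{k,N}(dz) := e^{\tilde{h}_{2^{-k/N} g(z)}(z)}\,dz$ converges weakly to $\mu$ as $k \to \infty$. The proof would follow that of Lemma \ref{lemma::Nconvergence} together with the opening computation in the proof of Theorem \ref{prop::convergence}: I would estimate
\begin{equation*}
\mathbb{E}\left|\int_S e^{\tilde{h}_{2^{-k/N} g(z)}(z)}\,dz - \int_S e^{\bar{h}_{2^{-k/N} g(z)}(z)}\,dz\right|^{2}
\end{equation*}
by conditioning pointwise in $z$ on the circle average $h_{2^{-k/N} g(z)}(z)$, using the identity $\mathbb{E}[e^{\tilde{h}_\epsilon(z)}\mid h_\epsilon(z)] = e^{\bar{h}_\epsilon(z)}$ with the position-dependent choice $\epsilon = 2^{-k/N} g(z)$, and then using that the corresponding $g$-twisted circle-average measures converge to $\mu$, which itself follows by the same $L^{2}$-plus-Borel--Cantelli arguments as in Lemma \ref{lemma::Nconvergence}.

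Next I would introduce, for each $N \geq 1$ and $k \geq 1$, the sandwich measures
\begin{equation*}
\overline{\nu}_{k,N}(dz) := \tilde{C}(N)\sup_{\delta \in [2^{-(k+1)/N}, 2^{-k/N}]} e^{\tilde{h}_{\delta g(z)}(z)}\,dz, \qquad \underline{\nu}_{k,N}(dz) := \uwave{C}(N)\inf_{\delta \in [2^{-(k+1)/N}, 2^{-k/N}]} e^{\tilde{h}_{\delta g(z)}(z)}\,dz,
\end{equation*}
with the same constants $\tilde{C}(N), \uwave{C}(N)$ as in the proof of Theorem \ref{prop::convergence}. The key point is that, conditional on $h_{2^{-k/N} g(z)}(z)$, the law of the process $\delta \mapsto \tilde{h}_{\delta g(z)}(z) - \bar{h}_{2^{-k/N} g(z)}(z)$ over $\delta \in [2^{-(k+1)/N}, 2^{-k/N}]$ depends only on the multiplicative window width $2^{1/N}$, by the stationary Brownian-increment structure of $t \mapsto h_{e^{-t}}(z)$ (Proposition \ref{prop::circleaverage}.4). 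Consequently the conditional expectation identity defining $\tilde{C}(N)$ and $\uwave{C}(N)$ goes through without modification, and the same $L^{2}$ computation as in Theorem \ref{prop::convergence} yields $\overline{\nu}_{k,N}, \underline{\nu}_{k,N} \to \mu$ almost surely. Since for every $\delta \in [2^{-(k+1)/N}, 2^{-k/N}]$ one has the pointwise sandwich $\underline{\nu}_{k,N}/\uwave{C}(N) \leq e^{\tilde{h}_{\delta g(z)}(z)} \leq \overline{\nu}_{k,N}/\tilde{C}(N)$, and $\tilde{C}(N), \uwave{C}(N) \to 1$ as $N \to \infty$, the statement follows.

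The main technical subtlety, relative to Theorem \ref{prop::convergence}, is that the ``base radii'' $2^{-k/N} g(z)$ used for conditioning depend on $z$, so the conditioning $\sigma$-algebras are not uniform across the dyadic square $S$. The pointwise-in-$z$ first-moment computation is unaffected because of the scale invariance of the increments of the circle-average process, but verifying that the cross-covariance terms in the $L^{2}$ bound still decay exponentially in $k$ will require using the continuity of $g$ on the compact set $S$ (so that all base radii are comparable up to a bounded multiplicative factor) together with the covariance formula (\ref{eqaution::samecenter}) evaluated at the slightly different base radii of the two points.
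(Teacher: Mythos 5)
Your approach matches what the paper intends: the paper itself gives no separate proof of this corollary, saying only that it follows ``using a similar argument'' to Theorem \ref{prop::convergence}, and your proposal — rerunning the two-level sandwich argument with the $z$-dependent radius $\epsilon g(z)$, using boundedness of $g$ on compacts to keep the conditioning balls disjoint and the scale-invariant Brownian-increment structure of $t\mapsto h_{e^{-t}}(z)$ to reuse the constants $\tilde C(N),\uwave C(N)$ — is precisely that. One small remark: once the conditioning balls are disjoint, the Markov property gives full conditional independence across grid points exactly as in Lemma \ref{lemma::Nconvergence}, so the ``cross-covariance terms'' you flag do not actually arise; the only role of the boundedness of $g$ is to ensure that the balls are disjoint and that the variance gap to the reference scale stays uniformly bounded.
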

\begin{rk}
Recall that $\Lambda$ is the collection of the conformal maps onto $D$ as defined in (\ref{equation::Lambda}).
For $\phi\in\Lambda$, we have
\[(h\circ\phi)\ast f_{\epsilon}(z)=\left(h, |(\phi^{-1})'|^{2}f_{\epsilon}\left(z-\phi^{-1}(\cdot)\right)\right).\]
Since $h$ is almost surely a distribution on $D$, it is almost surely the case that $(h\circ\phi)\ast f_{\epsilon}(z)$ are well-defined simultaneously for all $\phi\in\Lambda$, each viewed as an approximation of $h\circ\phi$, which makes it possible for us to prove that the desired results of $\mu^{h\circ\phi}$ in Theorem \ref{thm::conformalvariance} hold simultaneously for all $\phi\in\Lambda$.
\end{rk}
In summary, to define the Liouville quantum gravity measure $\mu^{h}=e^{\gamma h(z)}dz$, we start by approximate the GFF $h$ by the circle average process $h_{\epsilon}(z)$. It is known that the approximating measure a.s. weakly converges to a limiting measure along the geometric progression $\epsilon_{k}=2^{-k}$ (\cite{DuplantierSheffieldLQGKPZ}). We thus define $\mu^{h}$ the to be this limiting measure. In Theorem \ref{theorem::measureconvergence}, we generalize the a.s.\ weak convergence of the approximating measure from along $\epsilon_{k}$ to the situation where $\epsilon\rightarrow 0$ continuously. The measure $\mu^{h}$ is also the a.s.\ limit if we approximate $h$ by the convolution with some mollifier $f$ (Theorem \ref{prop::convergence}). Moreover, we will obtain $\mu^{h}$ as the limiting measure even when the scale $\epsilon=\epsilon(z)$ varies continuously with respect to $z$ (Corollary \ref{lemma::differentradius}).

\subsection{Proof of Theorem \ref{thm::conformalvariance}}\label{section::transform}
Now we can show that it is almost surely the case that the transformation rule (\ref{equation::transformrule}) holds simultaneously for all $\phi\in\Lambda$.

\begin{proof}[Proof of Theorem \ref{thm::conformalvariance}]
For each $\phi\in\Lambda$, in Corollary \ref{lemma::differentradius}, we set $g(z)=1/|\phi'(z)|$ and let $\epsilon$ converge to 0 along the sequence $\{2^{-k}, k\in\mathbb{N}\}$. We thus obtain that $\mu^{h\circ\phi+Q\log|\phi'|}$ is a.s. the weak limit of  \[\left(\frac{e^{C}\epsilon}{|\phi'(\omega)|}\right)^{\frac{\gamma^{2}}{2}}\exp\left[\gamma(h\circ\phi+Q\log|\phi'|)*f_{\frac{2^{-k}}{|\phi'(\omega)|}}(\omega)\right]d\omega\] 
as $k\rightarrow\infty$. It suffices to show that for each dyadic square $S$ compactly supported on $D$, it is almost surely the case that as $\epsilon\rightarrow 0$ along negative powers of 2, we have 
\begin{equation}\label{equation::coordinate}
\int_{\phi^{-1}(S)}\left(\frac{e^{C}\epsilon}{|\phi'(\omega)|}\right)^{\frac{\gamma^{2}}{2}}\exp\left[\gamma(h\circ\phi+Q\log|\phi'|)*f_{\frac{\epsilon}{|\phi'(\omega)|}}(\omega)\right]d\omega
\end{equation}
converge to $\mu^{h}(S)$ uniformly over $\phi\in\Lambda$, where $C$ is as defined in (\ref{equation::defC}). Once we are able to establish this, it follows immediately that it is almost surely the case that for all $\phi\in\Lambda$, the measure $\mu^{h\circ\phi+Q\log|\phi'|}$ are well-defined and the transformation rule (\ref{equation::transformrule}) holds simultaneously for all $\phi\in\Lambda$.

Change of coordinates implies that
\begin{eqnarray*}
\lefteqn{\int_{\phi^{-1}(S)}\left(\frac{e^{C}\epsilon}{|\phi'(\omega)|}\right)^{\frac{\gamma^{2}}{2}}\exp\left[\gamma(h\circ\phi+Q\log|\phi'|)*f_{\frac{\epsilon}{|\phi'(\omega)|}}(\omega)\right]d\omega}\\
&=&\int_{\phi^{-1}(S)}\left(\frac{e^{C}\epsilon}{|\phi'(\omega)|}\right)^{\frac{\gamma^{2}}{2}}\exp\left[\gamma(h\circ\phi)*f_{\frac{\epsilon}{|\phi'(\omega)|}}(\omega)+\left(2+\frac{\gamma^{2}}{2}\right)\log|\phi'(\omega)|\right]d\omega\\
&=&\int_{\phi^{-1}(S)}(e^{C}\epsilon)^{\frac{\gamma^{2}}{2}}\exp\left[\gamma(h\circ\phi)*f_{\frac{\epsilon}{|\phi'(\omega)|}}(\omega)\right]|\phi'(\omega)|^{2}d\omega\\
&=&\int_{S}(e^{C}\epsilon)^{\frac{\gamma^{2}}{2}}\exp\left[\gamma(h\circ\phi)*f_{\epsilon|(\phi^{-1})'(z)|}(\phi^{-1}(z))\right]dz.\\
\end{eqnarray*}
Note that when $\phi$ is the identity map on $D$, (\ref{equation::coordinate}) becomes $\tilde{\mu}^{h}_{\epsilon}(S)$. Therefore, Theorem \ref{prop::convergence} implies that the uniform convergence is equivalent to the fact that as $\epsilon\rightarrow 0$ along negative powers of 2, we have
\begin{equation}\label{equation::sup}
(e^{C}\epsilon)^{\frac{\gamma^{2}}{2}}\left|\sup_{\phi\in\Lambda}\int_{S}\exp\left[\gamma(h\circ\phi)*f_{\epsilon|(\phi^{-1})'(z)|}(\phi^{-1}(z))\right]-\exp\left[\gamma h*f_{\epsilon}(z)\right]dz\right|\rightarrow 0,
\end{equation}
and
\begin{equation}\label{equation::inf}
(e^{C}\epsilon)^{\frac{\gamma^{2}}{2}}\left|\inf_{\phi\in\Lambda}\int_{S}\exp\left[\gamma(h\circ\phi)*f_{\epsilon|(\phi^{-1})'(z)|}(\phi^{-1}(z))\right]-\exp[\gamma h*f_{\epsilon}(z)]dz\right|\rightarrow 0
\end{equation}
almost surely.

Without loss of generality, we assume that $S$ is the unit square $[0,1]^{2}$. Suppose that $\epsilon=2^{-k-3}$ for some $k\in\mathbb{N}$. For $y\in (0,1)^{2}$, define the set of points $S_{k}^{y}$ as in the proof of Lemma \ref{lemma::Nconvergence}. Let
\[A_{k}^{y}:=2^{-2k}\sum_{z\in S_{k}^{y}}(e^{C}\epsilon)^{\frac{\gamma^{2}}{2}}\sup_{\phi\in\Lambda}\exp\left[\gamma(h\circ\phi)*f_{\epsilon|(\phi^{-1})'(z)|}(\phi^{-1}(z))\right],\]
and
\[B_{k}^{y}:=2^{-2k}\sum_{z\in S_{k}^{y}}(e^{C}\epsilon)^{\frac{\gamma^{2}}{2}}\exp\left[\gamma h*f_{\epsilon}(z)\right].\]
In order to prove (\ref{equation::sup}), it suffices to show that $\mathbb{E}\left|A_{k}^{y}-B_{k}^{y}\right|^{2}$ decays exponentially in $k$ and uniformly over $y\in (0,1)^{2}.$

Note that 
\[(h\circ\phi)*f_{\epsilon|(\phi^{-1})'(z)|}(\phi^{-1}(z))=\left(h, |(\phi^{-1})'|^{2}f_{\epsilon|(\phi^{-1})'(z)|}(\phi^{-1}(z)-\phi^{-1}(\cdot))\right).\] 
Assume that $k$ is large enough such that $\textrm{dist}(S,\partial D)>2^{-k-1}$. The Koebe 1/4 theorem implies that $\phi\left(B_{\epsilon|(\phi^{-1})'(z)|}(\phi^{-1}(z))\right)\subset B_{4\epsilon}(z)$. Recall that $\epsilon=2^{-k-3}$ and thus the sets in $\{B_{4\epsilon}(z), z\in S_{k}^{y}\}$ are disjoint. By the Markov property of the GFF and Remark \ref{remark::independence}, we have that conditioned on the values of $h_{2^{-k-1}}(z)$ for $z\in S_{k}^{y}$, the random variables \[\sup_{\phi\in\Lambda}\exp\left[\gamma(h\circ\phi)*f_{\epsilon|(\phi^{-1})'(z)|}(\phi^{-1}(z))\right]-\exp[\gamma h*f_{\epsilon}(z)]\]
are independent of one another and each has the conditional law as that of 
\begin{equation}\label{equation::term}
e^{\gamma h_{2^{-k-1}}(z)}\left(\sup_{\phi\in\Lambda}\exp\left[\gamma(h^{z}\circ\phi)*f_{\epsilon|\left(\phi^{-1}\right)'(z)|}(\phi^{-1}(z))\right]-\exp[\gamma h^{z}*f_{\epsilon}(z)]\right),
\end{equation}
where $h^{z}$ is an instance of the GFF on $B_{4\epsilon}(z)$, independent of $h_{2^{-k-1}}(z)$.

In the following, we will give an estimate of the second moment of (\ref{equation::term}). For the ease of notation we assume that $z=0\in D$, $\phi^{-1}(0)=0$ and $\textrm{dist}(0,\partial D)>4\epsilon$. Then we have
\begin{eqnarray*}
\lefteqn{\sup_{\phi\in\Lambda}\exp\left[\gamma (h^{0}\circ\phi)*f_{\epsilon|(\phi^{-1})'(0)|}(0)\right]-\exp\left[\gamma h^{0}*f_{\epsilon}(0)\right]}\\
&=&\sup_{\phi\in\Lambda}\exp\left[\gamma\left(h^{0}, \left|\frac{(\phi^{-1})'(\cdot)}{\epsilon(\phi^{-1})'(0)}\right|^{2}f\left(\frac{\phi^{-1}(\cdot)}{\epsilon|(\phi^{-1})'(0)|}\right)\right)\right]-\exp\left[\gamma\left(h^{0},\frac{1}{\epsilon^{2}}f\left(\frac{\cdot}{\epsilon}\right)\right)\right]\\
&=&\sup_{\phi\in\Lambda^{*}}\exp\left[\gamma\left(h^{0}, \left|\frac{(\phi^{-1})'(\cdot)}{\epsilon}\right|^{2}f\left(\frac{\phi^{-1}(\cdot)}{\epsilon}\right)\right)\right]-\exp\left[\gamma\left(h^{0},\frac{1}{\epsilon^{2}}f\left(\frac{\cdot}{\epsilon}\right)\right)\right]\\
&\stackrel{d}{=}&\sup_{\phi\in\Lambda^{*}}\exp\left[\gamma\left(h^{*}, |(\phi^{-1})'(\epsilon\cdot)|^{2}f\left(\frac{\phi^{-1}(\epsilon\cdot)}{\epsilon}\right)\right)\right]-\exp\left[\gamma(h^{*},f)\right],\\
\end{eqnarray*}
where 
\begin{equation}\label{equation::lambdastar}
\Lambda^{*}=\{\phi\in\Lambda: \phi^{-1}(0)=0, \left(\phi^{-1}\right)'(0)=1\},
\end{equation}
and $h^{*}$ is an instance of the GFF on $B_{4}(0)$. The second equality follows from the fact that by considering $\frac{\phi^{-1}(\cdot)}{|(\phi^{-1})'(0)|}$ for each $\phi\in\Lambda$ and the radial symmetry of $f$, it suffices to take the supremum over $\phi\in\Lambda^{*}$. The third equality follows from the conformal invariance of the GFF. 

For each $\phi\in\Lambda^{*}$, we define the function $f_{\epsilon}^{\phi}: B_{4}(0)\rightarrow\mathbb{R}$ as
\begin{equation}\label{equation::fdef} f^{\phi}_{\epsilon}(\cdot):=|(\phi^{-1})'(\epsilon\cdot)|^{2}f\left(\frac{\phi^{-1}(\epsilon\cdot)}{\epsilon}\right).
\end{equation}
It is easy to see that $f^{\phi}_{\epsilon}$ is a $C^{\infty}$ real-valued function compactly supported on $B_{4}(0)$.

We claim that $m_{\epsilon}:=\mathbb{E}\sup_{\phi\in\Lambda^{*}}(h^{*},f_{\epsilon}^{\phi})$ converges to 0 as $\epsilon\rightarrow 0$ and for any $\alpha>0$, we have
\begin{equation}\label{equation::est}
0\le\mathbb{E}e^{\alpha\left(\sup_{\phi\in\Lambda^{*}}(h^{*}, f_{\epsilon}^{\phi})-(h^{*},f)\right)}-1\lesssim m_{\epsilon}\vee\epsilon^{2},
\end{equation}
where the constant in $\lesssim$ only depends on $\alpha$. We leave the proof of this claim to Lemma \ref{lemma::unicon} and Lemma \ref{lemma::est}.

By H\"{o}lder's inequality and (\ref{equation::est}), we have the $L^{1}$ estimate:
\begin{equation}\label{equation::L1est}
\begin{split}
&\mathbb{E}\left|e^{\gamma\sup_{\phi\in\Lambda^{*}}(h^{*}, f_{\epsilon}^{\phi})}-e^{\gamma(h^{*},f)}\right|\\
&\le\left(\mathbb{E}e^{2\gamma(h^{*},f)}\mathbb{E}\left|e^{\gamma\left[\sup_{\phi\in\Lambda^{*}}(h^{*}, f_{\epsilon}^{\phi})-(h^{*},f)\right]}-1\right|^{2}\right)^{\frac{1}{2}}\\
&\lesssim\left(\mathbb{E}e^{2\gamma\left[\sup_{\phi\in\Lambda^{*}}(h^{*}, f_{\epsilon}^{\phi})-(h^{*},f)\right]}-2\mathbb{E}e^{\gamma\left[\sup_{\phi\in\Lambda^{*}}(h^{*}, f_{\epsilon}^{\phi})-(h^{*},f)\right]}+1\right)^{\frac{1}{2}}\\
&\le\left(\mathbb{E}e^{2\gamma\left[\sup_{\phi\in\Lambda^{*}}(h^{*}, f_{\epsilon}^{\phi})-(h^{*},f)\right]}-1\right)^{\frac{1}{2}}\\
&\lesssim\sqrt{m_{\epsilon}}\vee\epsilon,\\
\end{split}
\end{equation}
where the constants in $\lesssim$ only depend on $\gamma$.

Let 
\[\overline{C}_{\epsilon}:=\frac{\mathbb{E}e^{\gamma (h^{*},f)}}{\mathbb{E}e^{\gamma\sup_{\phi\in\Lambda^{*}}(h^{*}, f_{\epsilon}^{\phi})}}.\]
Then (\ref{equation::L1est}) implies that 
\begin{equation}\label{equation::estC}
0\le 1-\overline{C}_{\epsilon}\lesssim\sqrt{m_{\epsilon}}\vee\epsilon,
\end{equation} 
where the constant in $\lesssim$ only depends on $\gamma$.

By (\ref{equation::est}), (\ref{equation::estC}) and H\"{o}lder's inequality, we can also obtain the $L^{2}$ estimate:
\begin{equation}\label{equation::L2est}
\begin{split}
&\mathbb{E}\left|\overline{C}_{\epsilon}e^{\gamma\sup_{\phi\in\Lambda^{*}}(h^{*}, f_{\epsilon}^{\phi})}-e^{\gamma (h^{*},f)}\right|^{2}\\
&\le 2\overline{C}_{\epsilon}^{2}\left(\mathbb{E}e^{4\gamma(h^{*},f)}\mathbb{E}\left|e^{\gamma\left[\sup_{\phi\in\Lambda^{*}}(h^{*}, f_{\epsilon}^{\phi})-(h^{*},f)\right]}-1\right|^{4}\right)^{\frac{1}{2}}+2(\overline{C}_{\epsilon}-1)^{2}\mathbb{E}e^{2\gamma(h^{*},f)}\\
&\lesssim \sqrt{m_{\epsilon}}\vee\epsilon,\\
\end{split}
\end{equation}
where the constant in $\lesssim$ only depends on $\gamma$.

For each $z\in S^{y}_{k}$, let 
\[C_{\epsilon}(z):=\frac{\mathbb{E}\exp[\gamma h^{z}*f_{\epsilon}(z)]}{\mathbb{E}\sup_{\phi\in\Lambda}\exp\left[\gamma(h^{z}\circ\phi)*f_{\epsilon|\left(\phi^{-1}\right)'(z)|}(\phi^{-1}(z))\right]}.\]
It is easy to check that $C_{\epsilon}(z)$ has the same bound 
\begin{equation}\label{equation::czest}
0\le 1-C_{\epsilon}(z)\lesssim\sqrt{m_{\epsilon}}\vee\epsilon,
\end{equation} 
where the constant in $\lesssim$ is uniform over $z$, and the above estimates hold when we replace 0 with any $z\in S_{k}^{y}$.
We modify $A_{k}^{y}$ as
\[\hat{A}_{k}^{y}:=2^{-2k}\sum_{z\in S_{k}^{y}}(e^{C}\epsilon)^{\frac{\gamma^{2}}{2}}C_{\epsilon}(z)\sup_{\phi\in\Lambda}\exp\left[\gamma(h\circ\phi)*f_{\epsilon|(\phi^{-1})'(z)|}(\phi^{-1}(z))\right]\]
to make the cross terms vanish in the estimate of $\mathbb{E}\left|\hat{A}_{k}^{y}-B_{k}^{y}\right|^{2}$. As in the proof of Lemma \ref{lemma::Nconvergence}, by H\"{o}lder's inequality and (\ref{equation::L2est}), we have
\begin{equation}\label{equation::squareest}
\mathbb{E}\left|\hat{A}_{k}^{y}-B_{k}^{y}\right|^{2}\lesssim 2^{-4k}\sum_{z\in S_{k}^{y}}\mathbb{E}e^{2\bar{h}_{2^{-k-1}}(z)}\lesssim 2^{(\gamma^{2}-2)k},
\end{equation}
where the constants in $\lesssim$ are independent of $k$ and $y$.

When $0\le\gamma<\sqrt{2}$, combining (\ref{equation::squareest}) with (\ref{equation::czest}) yields (\ref{equation::sup}). The case for $\sqrt{2}\le\gamma<2$ can be proved by breaking the sum over $z\in S^{y}_{k}$ into two parts as in the proof of Lemma \ref{lemma::Nconvergence}. To be more precise, we define $\tilde{S}_{k}^{y}$ as in (\ref{equation::tildes}) and define $\tilde{A}_{k}^{y}$ and $\tilde{B}_{k}^{y}$ based on $\hat{A}_{k}^{y}$ and $B_{k}^{y}$ analogously. The conditional distribution argument before (\ref{equation::term}) and the estimate (\ref{equation::L1est}), (\ref{equation::czest}) and (\ref{equation::L2est}) imply that
\[\mathbb{E}|\tilde{A}_{k}^{y}-\tilde{B}_{k}^{y}|\lesssim\mathbb{E}1_{\{z\in\tilde{S}_{k}^{y}\}}\exp\bar{h}_{2^{-k-1}}(z),\]
and
\[\mathbb{E}|\hat{A}_{k}^{y}-\tilde{A}_{k}^{y}+B_{k}^{y}-\tilde{B}_{k}^{y}|^{2}\lesssim 2^{-4k}\sum_{z\in S_{k}^{y}}\mathbb{E}1_{\{z\in S_{k}^{y}\backslash\tilde{S_{k}^{y}}\}}e^{2\bar{h}_{2^{-k-1}}(z)}.\]
The computation at the end of the proof of Lemma \ref{lemma::Nconvergence} leads to the desired result.

(\ref{equation::inf}) can be proved similarly.
\end{proof}
We finish this section with the proof of (\ref{equation::est}).

For a domain $D$, recall that the space of test functions on $D$ is the space of $C^{\infty}$ real-valued functions compactly supported on $D$, equipped with the topology of uniform convergence of all derivatives.
\begin{lem}\label{lemma::unicon}
Let $\Lambda^{*}$ be as in (\ref{equation::lambdastar}) and $f_{\epsilon}^{\phi}$ as in (\ref{equation::fdef}). Then $f_{\epsilon}^{\phi}$ converge to $f$ in the space of test functions on $B_{4}(0)$ uniformly over $\phi\in\Lambda^{*}$ as $\epsilon\rightarrow 0$. Especially, we have
\begin{equation}\label{equation::infnityest}
||f_{\epsilon}^{\phi}-f||_{\infty}\lesssim\epsilon,
\end{equation}
where the constant in $\lesssim$ is uniform over $\phi\in\Lambda^{*}$
\end{lem}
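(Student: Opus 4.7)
The plan is to use de Branges's theorem to obtain uniform control of the Taylor coefficients of $\phi^{-1}$ across $\phi \in \Lambda^{*}$, and then propagate those bounds through the chain rule against the smooth bump $f$. First I would fix $r_{0} > 0$ small enough that $B_{r_{0}}(0) \subset D$, which is possible since $0 \in D$. For each $\phi \in \Lambda^{*}$, the rescaled map $g_{\phi}(z) := r_{0}^{-1} \phi^{-1}(r_{0} z)$ is analytic and injective on $\mathbb{D}$ with $g_{\phi}(0) = 0$ and $g_{\phi}'(0) = 1$, so $g_{\phi}$ is schlicht. Theorem \ref{theorem::debran} then gives $|b_{n}^{\phi}| \le n$ for its Taylor coefficients, which unrolls to $\phi^{-1}(z) = z + \sum_{n \ge 2} a_{n}^{\phi} z^{n}$ with $|a_{n}^{\phi}| \le n r_{0}^{1-n}$ on $B_{r_{0}}(0)$. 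The essential point is that this bound is independent of $\phi$.

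Restricting to $w \in B_{4}(0)$ and $\epsilon < r_{0}/16$, summing geometric tails with the uniform coefficient bound gives, uniformly in $\phi \in \Lambda^{*}$ and $w$,
\[|\phi^{-1}(\epsilon w) - \epsilon w| \lesssim \epsilon^{2}, \qquad |(\phi^{-1})'(\epsilon w) - 1| \lesssim \epsilon, \qquad |(\phi^{-1})^{(k)}(\epsilon w)| \lesssim 1 \textrm{ for each } k \ge 2.\]
Setting $u_{\phi}(w) := \phi^{-1}(\epsilon w)/\epsilon$, these translate into $u_{\phi}(w) = w + O(\epsilon)$, $u_{\phi}'(w) = 1 + O(\epsilon)$, and $u_{\phi}^{(k)}(w) = O(\epsilon^{k-1})$ for $k \ge 2$. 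Similarly $|(\phi^{-1})'(\epsilon w)|^{2} = 1 + O(\epsilon)$ with each $w$-derivative of size $O(\epsilon)$. All $O(\cdot)$ constants here are uniform over $\phi$ and $w$.

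To finish, I would combine these with the smoothness of $f$. The first-order estimate
\[f_{\epsilon}^{\phi}(w) - f(w) = |(\phi^{-1})'(\epsilon w)|^{2} f(u_{\phi}(w)) - f(w) = (1 + O(\epsilon))(f(w) + O(\epsilon)) - f(w) = O(\epsilon)\]
is uniform in $w \in B_{4}(0)$ and $\phi \in \Lambda^{*}$, yielding (\ref{equation::infnityest}). For general multi-indices $\alpha$, Fa\`a di Bruno's formula expresses $\partial^{\alpha}[|(\phi^{-1})'|^{2} \cdot (f \circ u_{\phi})]$ as a polynomial combination of derivatives of $|(\phi^{-1})'|^{2}$, of $u_{\phi}$, and of $f$ evaluated at $u_{\phi}(w)$; the uniform bounds above together with $C^{\infty}$ smoothness of $f$ then give $\partial^{\alpha} f_{\epsilon}^{\phi} \to \partial^{\alpha} f$ uniformly on $B_{4}(0)$, which is exactly convergence in the test-function topology. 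The only real obstacle is the uniformity over the infinite family $\Lambda^{*}$, and this is precisely what de Branges's theorem delivers; individual-$\phi$ estimates would follow from standard Koebe distortion bounds but would not come with a common modulus across the family.
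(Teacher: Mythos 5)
Your proof is essentially the same as the paper's: both use de Branges's theorem to obtain $\phi$-uniform bounds on the Taylor coefficients of $\phi^{-1}$ near the origin (the paper simply normalizes $\mathrm{dist}(0,\partial D)>1$ so that $\phi^{-1}|_{\mathbb D}$ is itself schlicht, which is the same move as your $r_0$-rescaling), and then dominate the tails by the Koebe series $\sum n z^n$ to conclude that $\phi^{-1}(\epsilon\,\cdot)/\epsilon$ converges to the identity together with all derivatives, uniformly over $\Lambda^*$, from which convergence of $f_\epsilon^\phi$ in the test-function topology follows by the chain rule. One side remark to correct: your closing claim that Koebe distortion bounds ``would not come with a common modulus across the family'' is not accurate --- the classical growth and distortion theorems are statements about the entire class $S$ at once and already give $\phi$-uniform control of $\phi^{-1}$ and $(\phi^{-1})'$ (indeed, one does not really need the full strength of de Branges here, only a bound of the form $|a_n|\le Cn$); this does not affect the correctness of your argument.
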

\begin{proof}
Recall (\ref{equation::fdef}) and note that $f$ is compactly supported on $B_{1}(0)$. It thus suffices to show that  $\frac{\phi(\epsilon\cdot)}{\epsilon}$ converges to the identity map with respect to the topology of uniform convergence of all derivatives on $\overline{B_{4}(0)}$ as $\epsilon\rightarrow 0$.

Without loss of generality, we may assume that $\textrm{dist}(0,\partial D)>1$. Then for each $\phi\in\Lambda^{*}$, we have $\phi^{-1}|_{\mathbb{D}}\in S$ (recall the definition of schlicht functions in Section \ref{section::debr}) and by de Branges's theorem (Theorem \ref{theorem::debran}), it has the Taylor expansion
$\phi^{-1}(z)=z+\sum_{n=2}^{\infty}a_{n}z^{n}$ for $|z|<1$ with $|a_{n}|\le n$.

Recall that the Koebe function $f_{\textrm{Koebe}}$ is defined by
\[f_{\textrm{Koebe}}(z):=\frac{z}{(1-z)^{2}}=\sum_{n=1}^{\infty}nz^{n}\quad{\textrm{for $|z|<1$}}.\]
Since we only consider $|z|\le 4$, when $\epsilon<1/8$, we have
\begin{equation}\label{equation::estphi}
\left|\frac{\phi^{-1}(\epsilon z)}{\epsilon}-z\right|\le\frac{1}{\epsilon}\sum_{n=2}^{\infty}n\epsilon^{n}| z|^{n}=\frac{1}{\epsilon}\left(f_{\textrm{Koebe}}(\epsilon|z|)-\epsilon |z|\right)=\epsilon\frac{2|z|^{2}-\epsilon|z|^{3}}{(1-\epsilon|z|)^{2}}\lesssim\epsilon,
\end{equation}
\begin{equation}\label{equation::estdiffphi}
\left|(\phi^{-1})'(\epsilon z)-1\right|\le\sum_{n=2}^{\infty}n^{2}(\epsilon|z|)^{n-1}= f'_{\textrm{Koebe}}(\epsilon|z|)-1=\epsilon\frac{4|z|-3\epsilon|z|^{2}+\epsilon^{2}|z|^{3}}{(1-\epsilon|z|)^{3}}\lesssim\epsilon,
\end{equation}
and generally for $k\ge 2$,
\[\left|\epsilon^{k-1}(\phi^{-1})^{(k)}(\epsilon z)\right|\le \epsilon^{k-1}f^{(k)}_{\textrm{Koebe}}(\epsilon|z|)\lesssim\epsilon^{k-1},\]
where the constants in $\lesssim$ are uniform over $|z|\le 4$ and $\phi\in\Lambda^{*}$. Therefore, we have that $f_{\epsilon}^{\phi}$ converge to $f$ in the space of test functions on $B_{4}(0)$ uniformly over $\phi\in\Lambda^{*}$ as $\epsilon\rightarrow 0$.

Since $f$ is compactly supported on $B_{1}(0)$ and radially symmetric, (\ref{equation::infnityest}) follows from (\ref{equation::estphi}) and (\ref{equation::estdiffphi}).
\end{proof}
\begin{lem}\label{lemma::est}
Assume the same setting as in the proof of Theorem \ref{thm::conformalvariance}. Then $m_{\epsilon}:=\mathbb{E}\sup_{\phi\in\Lambda^{*}}(h^{*},f_{\epsilon}^{\phi})$ converges to 0 as $\epsilon\rightarrow 0$. Moreover, for any $\alpha>0$, we have
\[0\le\mathbb{E}e^{\alpha\left(\sup_{\phi\in\Lambda^{*}}(h^{*}, f_{\epsilon}^{\phi})-(h^{*},f)\right)}-1\lesssim m_{\epsilon}\vee\epsilon^{2},\]
where the constant in $\lesssim$ only depends on $\alpha$.
\end{lem}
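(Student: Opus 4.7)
The plan is to combine three ingredients: the compactness of $\Lambda^{*}$ arising from de Branges's theorem, the quantitative convergence $\|f_{\epsilon}^{\phi} - f\|_{\infty} \lesssim \epsilon$ established in Lemma \ref{lemma::unicon}, and the Borell-TIS inequality of Proposition \ref{propsition::supest}. Set $Y_{\epsilon} := \sup_{\phi \in \Lambda^{*}} \bigl((h^{*}, f_{\epsilon}^{\phi}) - (h^{*}, f)\bigr)$; since the identity map lies in $\Lambda^{*}$ with $f_{\epsilon}^{\mathrm{id}} = f$, one has $Y_{\epsilon} \ge 0$, and because $(h^{*}, f)$ is centered, $m_{\epsilon} = \mathbb{E} Y_{\epsilon} \ge 0$. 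Measurability and a.s.\ finiteness of $Y_{\epsilon}$ follow from the fact that $\Lambda^{*}$, parametrized by $\phi^{-1}|_{\mathbb{D}} \in S$, is compact in the topology of uniform convergence on compact subsets of $\mathbb{D}$ (the schlicht class is a normal family, and the derivative bounds used in Lemma \ref{lemma::unicon} supply the needed uniform control), and the map $\phi \mapsto f_{\epsilon}^{\phi}$ is continuous into the space $\mathcal{D}(B_{4}(0))$ of test functions; since $h^{*}$ is a.s.\ a continuous linear functional on $\mathcal{D}(B_{4}(0))$, $\phi \mapsto (h^{*}, f_{\epsilon}^{\phi})$ is a.s.\ continuous on the compact set $\Lambda^{*}$.

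For the variance estimate, using $\|f_{\epsilon}^{\phi} - f\|_{\infty} \lesssim \epsilon$ from Lemma \ref{lemma::unicon} together with the fact that all $f_{\epsilon}^{\phi}$ are supported in a fixed compact subset of $B_{4}(0)$ and that the Green's function on $B_{4}(0)$ is integrable over bounded sets, one obtains
\[\sup_{\phi \in \Lambda^{*}} \mathrm{Var}\bigl((h^{*}, f_{\epsilon}^{\phi} - f)\bigr) \le C\,\epsilon^{2}.\]
Proposition \ref{propsition::supest} then shows that $Y_{\epsilon} - m_{\epsilon}$ is sub-Gaussian with parameter $O(\epsilon)$. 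To upgrade this to $m_{\epsilon} \to 0$, I would argue via a.s.\ convergence plus dominated convergence: the family $\{f_{\epsilon}^{\phi} - f : \phi \in \Lambda^{*},\, \epsilon \le \epsilon_{0}\} \cup \{0\}$ is a compact subset of $\mathcal{D}(B_{4}(0))$ that contracts to $\{0\}$ as $\epsilon_{0} \to 0$ (by Lemma \ref{lemma::unicon}), so uniform continuity of the continuous linear functional $h^{*}$ on this compact set forces $Y_{\epsilon} \to 0$ a.s., while the uniform sub-Gaussian tails supply the integrability needed to conclude $m_{\epsilon} = \mathbb{E} Y_{\epsilon} \to 0$.

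Finally, for the moment generating function bound, Proposition \ref{proposition::subgaussian} applied to $Y_{\epsilon} - m_{\epsilon}$ yields $\mathbb{E} e^{\alpha(Y_{\epsilon} - m_{\epsilon})} \le e^{C \alpha^{2} \epsilon^{2}}$, hence $\mathbb{E} e^{\alpha Y_{\epsilon}} \le e^{\alpha m_{\epsilon} + C \alpha^{2} \epsilon^{2}}$; for $\epsilon$ small enough that this exponent lies in $[0,1]$, the elementary bound $e^{x} - 1 \le 2x$ gives $\mathbb{E} e^{\alpha Y_{\epsilon}} - 1 \lesssim m_{\epsilon} \vee \epsilon^{2}$ with constant depending only on $\alpha$. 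The main obstacle will be justifying $m_{\epsilon} \to 0$: a priori the supremum over the infinite-dimensional family $\Lambda^{*}$ could behave much worse than the pointwise Gaussian variances suggest, and it is only the combination of compactness of $\Lambda^{*}$ (via de Branges and Lemma \ref{lemma::unicon}) with the a.s.\ continuity of $h^{*}$ as a distribution and with Borell-TIS that makes the dominated convergence step go through.
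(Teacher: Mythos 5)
Your proof is correct and follows essentially the same route as the paper: the variance bound from Lemma~\ref{lemma::unicon}, Proposition~\ref{propsition::supest} (Borell--TIS), a.s.\ convergence of the supremum via compactness of $\{f_\epsilon^\phi\}$ in $\mathcal{D}(B_4(0))$ together with the a.s.\ continuity of $h^*$ as a distribution, and Proposition~\ref{proposition::subgaussian} for the moment generating function. The only step where the paper is slightly cleaner is deducing $m_\epsilon\to 0$: instead of dominated convergence (which needs a small extra care because the uniform sub-Gaussian tail controls the \emph{centered} variable $Y_\epsilon-m_\epsilon$, not $Y_\epsilon$ itself), the paper combines the a.s.\ convergence $Y_\epsilon\to 0$ (hence convergence in probability) with the variance bound $\mathrm{var}(Y_\epsilon)\le c\epsilon^2$ (hence $Y_\epsilon - m_\epsilon\to 0$ in probability, by Chebyshev) and concludes that the deterministic sequence $m_\epsilon$ tends to $0$.
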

\begin{proof}
Let $G(\cdot, \cdot)$ be the Green's function for the Laplacian on $B_{4}(0)$ as defined in (\ref{equation::green}). For each $\phi\in\Lambda^{*}$ it follows from (\ref{equation::infnityest}) that 
\begin{equation}\label{equation::estvar}
\begin{split}
&\textrm{var}\left((h^{*}, f_{\epsilon}^{\phi})-(h^{*},f)\right)\\
&=\iint_{B_{4}(0)\times B_{4}(0)}G(x,y)\left(f_{\epsilon}^{\phi}(x)-f(x)\right)\left(f_{\epsilon}^{\phi}(y)-f(y)\right)dxdy\\
&\le\iint_{B_{4}(0)\times B_{4}(0)}G(x,y)||f_{\epsilon}^{\phi}-f||_{\infty}^{2}dxdy\\
&\lesssim\epsilon^{2}\iint_{B_{4}(0)\times B_{4}(0)}G(x,y)dxdy\\
&\le c\epsilon^{2}, 
\end{split}
\end{equation}
for some constant $c>0$.

Note that for a fixed $\epsilon$, the set of the distorted bump functions $\left\{f_{\epsilon}^{\phi}: \phi\in\Lambda^{*}\right\}$ is compact in the space of test functions on $B_{4}(0)$ with respect to the topology of uniform convergence of all derivatives. Since $h^{*}$ is almost surely a distribution on $B_{4}(0)$, we have $\sup_{\phi\in\Lambda^{*}}(h^{*}, f_{\epsilon}^{\phi})<\infty$
almost surely. By continuity, it is equal to the supremum taken over a countable dense subset. It thus follows from (\ref{equation::estvar}) and Proposition \ref{propsition::supest} that  
\begin{equation}\label{equation::var}
\textrm{var}\left(\sup_{\phi\in\Lambda^{*}}(h^{*}, f_{\epsilon}^{\phi})-(h^{*},f)\right)\le c\epsilon^{2},
\end{equation}
and for any $r>0$, we have
\begin{equation}\label{equation::tail}
\mathbb{P}\left(\left|\sup_{\phi\in\Lambda^{*}}(h^{*}, f_{\epsilon}^{\phi})-(h^{*},f)-m_{\epsilon}\right|>r\right)\le 2 e^{-\frac{r^{2}}{2c\epsilon^{2}}}.
\end{equation}

From Lemma \ref{lemma::unicon} we have that $f_{\epsilon}^{\phi}$ converge to $f$ in the space of test functions on $B_{4}(0)$ uniformly over $\phi\in\Lambda^{*}$ as $\epsilon\rightarrow 0$. A continuity argument as above implies that
$\sup_{\phi\in\Lambda^{*}}(h^{*}, f_{\epsilon}^{\phi})-(h^{*},f)\rightarrow 0$
almost surely as $\epsilon\rightarrow 0$. Combining this fact with (\ref{equation::var}) implies that 
\begin{equation}\label{equation::estexp}
m_{\epsilon}\rightarrow 0\quad\textrm{as}\quad\epsilon\rightarrow 0.
\end{equation}
It thus follows from (\ref{equation::tail}), (\ref{equation::estexp}) and Proposition \ref{proposition::subgaussian} that for any $\alpha>0$, we have
\begin{eqnarray*}
0&\le&\mathbb{E}e^{\alpha\left(\sup_{\phi\in\Lambda^{*}}(h^{*}, f_{\epsilon}^{\phi})-(h^{*},f)\right)}-1\\
&=&e^{\alpha m_{\epsilon}}\mathbb{E}e^{\alpha\left(\sup_{\phi\in\Lambda^{*}}(h^{*}, f_{\epsilon}^{\phi})-(h^{*},f)-m_{\epsilon}\right)}-1\\
&\le&e^{\alpha m_{\epsilon}+4c\alpha^{2}\epsilon^{2}}-1\\
&\lesssim&m_{\epsilon}\vee\epsilon^{2},\\
\end{eqnarray*}
where the constant in $\lesssim$ only depends on $\alpha$.
\end{proof}

\section{Generalization to boundary measures}\label{section::boundary}
The above results about random measures on $D$ have straightforward analogs for random measures on $\partial D$. We first recall some properties of the boundary semicircle average processes. Suppose that $D$ is a bounded simply connected domain whose boundary contains a linear piece $\underline{\partial D}\subset \partial D\cap\mathbb{R}$ and that $h$ is an instance of the GFF on $D$ with \textit{free} boundary conditions, normalized to have mean zero on $D$. That means $h=\sum_{n}\alpha_{n}f_{n}$, where the $\alpha_{n}$ are i.i.d. standard Gaussian random variables and the $f_{n}$ are an orthonormal basis of the Hilbert space closure $H(D)$ of the space of $C^{\infty}$ real-valued bounded functions on $D$ with mean zero equipped with the Dirichlet inner product:
\[(f_{1},f_{2})_{\nabla}:=\frac{1}{2\pi}\int_{D}\nabla f_{1}(z)\cdot\nabla f_{2}(z)dz.\]

For $z\in\underline{\partial D}$, one can let $h_{\epsilon}(z)$ be the average value of $h$ on the semicircle of radius $\epsilon$ centered at $z$ and contained in $D$ (see \cite[Section 6.1]{SheffieldGFFMath} for a proof that makes sense of this). 

One may also consider the GFF $h$ on $D$ with \textit{mixed} boundary conditions. That is, $h$ has free boundary conditions on the linear component $\underline{\partial D}$, and zero boundary conditions on its complement $\partial D\backslash \underline{\partial D}$ (see the caption of Figure \ref{figure::symmetry} for the construction using a reflection principle). Let $h_{\epsilon}(z)$ be the semicircle average of $h$. For $z,z'\in\underline{\partial D}$ and small enough $\epsilon, \epsilon'>0$, \cite[Section 6.2]{DuplantierSheffieldLQGKPZ} showed that
\begin{equation}\label{equation::mix}
\mathrm{cov}(h_{\epsilon}(z), h_{\epsilon'}(z))=-2\log(\epsilon\vee\epsilon')-\tilde{G}_{z}(z),
\end{equation}
and if $B_{\epsilon}(z)\cap B_{\epsilon'}(z')=\emptyset$, then
\begin{equation}\label{equation::mixcov}\mathrm{cov}(h_{\epsilon}(z), h_{\epsilon'}(z'))=-2\log|z-z'|-\tilde{G}_{z}(z'),
\end{equation}
where $\tilde{G}_{z}$ is a harmonic function on $D$ satisfying certain boundary conditions. We remark that $\tilde G_{z}$ depends continuously on $z$ and (\ref{equation::mixcov}) implies that $\tilde{G}_{z}(z')=\tilde{G}_{z'}(z)$.
\begin{figure}[!htbp]
\begin{center}
\includegraphics{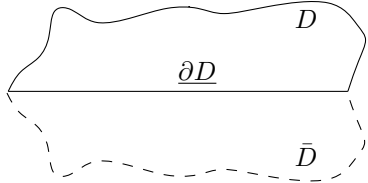}
\end{center}
\caption{\label{figure::symmetry}\small{In order to construct an instance of the GFF with mixed boundary conditions, we let $\bar{D}$ be the complex conjugate of $D$, and consider the whole domain $D^{\dagger}=D\cup\bar{D}$. The Hilbert space closure $H(D^{\dagger})$ of the space of $C^{\infty}$ real-valued functions compactly supported on $D^{\dag}$ can be decomposed as the direct sum $H_{e}(D^{\dagger})\oplus H_{o}(D^{\dagger})$ of the Hilbert space closures corresponding to even and odd functions on $D^{\dagger}$ with respect to the real line supporting $\underline{\partial D}$. The GFF $h$ on $D$ with mixed boundary conditions can be obtained by projecting the GFF on $D^{\dagger}$ with zero boundary conditions onto $H_{e}(D^{\dagger})$ and restricting to $D$. }}
\end{figure}
\begin{rk}\label{remark::freeboundary}
It is worthwhile to point out the relation between the free boundary GFF and the mixed boundary GFF. Recall the definition of $H(D)$ in the beginning of this section. We may also view $H(D)$ as the Hilbert space closure with respect to the Dirichlet inner product of the space of $C^{\infty}$ real-valued bounded functions on $D$ defined up to an additive constant. Let $H^{\dag}(D^{\dag})$ be the space of even functions (defined up to an additive constant) on $D^{\dag}$  with respect $\underline{\partial D}$ obtained by reflecting the functions in $H(D)$ with respect to $\underline{\partial D}$. It is easy to verify that $H^{\dag}(D^{\dag})$ has the following orthogonal decomposition with respect to the Dirichlet inner product 
\begin{equation}\label{equation::decom}
H^{\dag}(D^{\dag})=H_{0}^{\dag}(D^{\dag})\oplus H_{h}^{\dag}(D^{\dag}),
\end{equation}
where $H_{0}^{\dag}(D^{\dag})$ consists of the functions in $H^{\dag}(D^{\dag})$ which satisfy the zero boundary conditions on $\partial D^{\dag}$ (defined up to an additive constant), and $H_{h}^{\dag}(D^{\dag})$ consists of the functions in $H^{\dag}(D^{\dag})$ which are harmonic (defined up to an additive constant). By considering (\ref{equation::decom}) and restricting the functions to $D$, we observe that if $h$ is a free boundary GFF on $D$, it has the decomposition
\[h=h_{1}+h_{2},\]
where $h_{1}$ is a mixed boundary GFF on $D$ and $h_{2}$ is almost surely a harmonic function on $D$ (defined up to an additive constant) which can be extended to a harmonic function on $D^{\dag}$.
\end{rk}

For either boundary conditions, we let $\bar{h}_{\epsilon}(z):=\frac{\gamma^{2}}{4}\log\epsilon+\frac{\gamma}{2} h_{\epsilon}(z)$ and define the boundary measures $\mu_{\epsilon}^{B}:=e^{\bar{h}_{\epsilon}(z)}dz$, where $dz$ is the Lebesgue measure on the boundary component $\underline{\partial D}$. We are able to define the quantum boundary measures as the limit of $\mu^{B}_{\epsilon}$:

\begin{thm}\label{theorem::boundaryest}
Fix $\gamma\in[0,2)$ and define $D,\underline{\partial D},h,\mu_{\epsilon}^{B}$ as above with either free or mixed boundary conditions. Then as $\epsilon\rightarrow 0$, the measures $\mu_{\epsilon}^{B}$ a.s.\ converge to a limiting measure on $\underline{\partial D}$, which we denote by $\mu^{B}=\mu^{B}_{h}$. 
\end{thm}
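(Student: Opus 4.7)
The argument adapts the proofs of Lemma \ref{lemma::Nconvergence} and Theorem \ref{theorem::measureconvergence} to the one-dimensional boundary setting. As a first step, I reduce the free boundary case to the mixed case: by Remark \ref{remark::freeboundary}, a free boundary GFF decomposes as $h = h_1 + h_2$ with $h_1$ a mixed-boundary GFF and $h_2$ a.s. harmonic on $D^\dagger = D \cup \bar{D}$. Since $\underline{\partial D}$ lies in the interior of $D^\dagger$, the random function $h_2$ is smooth across $\underline{\partial D}$, and $(h_2)_\epsilon(z) \to h_2(z)$ uniformly on compact subsets of $\underline{\partial D}$. Thus $\mu^{B,h}_\epsilon = e^{\frac{\gamma}{2}(h_2)_\epsilon(z)}\mu^{B,h_1}_\epsilon$ has the same a.s. limit as the mixed-boundary measure multiplied by the continuous density $e^{\frac{\gamma}{2}h_2}$, reducing the claim to the mixed case.

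For the mixed case, a.s. convergence of $\mu^B_{2^{-k}}$ to a limit $\mu^B$ is established in \cite[Section 6]{DuplantierSheffieldLQGKPZ}. I first promote this to convergence along $\epsilon = 2^{-k/N}$ following Lemma \ref{lemma::Nconvergence}: fix an interval $I \subset \underline{\partial D}$ of unit length, discretize it into $2^k$ points $S_k^y \subset I$ at spacing $2^{-k}$, and set
\[A_k^y := 2^{-k}\sum_{z \in S_k^y} e^{\bar{h}_{2^{-k-1}}(z)}, \qquad B_k^y := 2^{-k}\sum_{z \in S_k^y} e^{\bar{h}_{2^{-n/N-k-1}}(z)}.\]
By the Markov property and (\ref{equation::mix}), conditional on $\{h_{2^{-k-1}}(z): z \in S_k^y\}$ the refined averages $h_{2^{-n/N-k-1}}(z)$ are independent Gaussians with prescribed means and common variance proportional to $n/N$. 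Using $\mathbb{E}\,e^{2\bar{h}_{2^{-k-1}}(z)} \asymp 2^{k\gamma^2/2}$ (the doubled boundary variance is exactly offset by the $\gamma/2$ coefficient in $\bar{h}_\epsilon$), a direct second-moment computation yields $\mathbb{E}|A_k^y - B_k^y|^2 \lesssim 2^{k(\gamma^2/2 - 1)}$, exponentially summable for $\gamma \in [0, \sqrt{2})$. For $\gamma \in [\sqrt{2}, 2)$, I carry out the truncation of Lemma \ref{lemma::Nconvergence} with the caveat that the boundary variance $\sigma^2 \approx -2\log\epsilon$ is twice the bulk one, so the natural range of the truncation parameter becomes $\alpha \in (\gamma/2, \gamma)$ rather than $(\gamma, 2\gamma)$; verifying that both the truncated first-moment and complementary second-moment exponents are positive for $\alpha$ close enough to $\gamma/2$ closes this step.

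To pass from $\epsilon = 2^{-k/N}$ to continuous $\epsilon \to 0$, I mimic the sandwich construction of Theorem \ref{theorem::measureconvergence}. Let $B_t$ denote a one-dimensional Brownian motion of variance $2t$, matching the boundary covariance structure (\ref{equation::mix}), and set
\[\overline{C}^B(N) := \left(\mathbb{E}\sup_{t \in [0,\frac{\log 2}{N}]} e^{-\frac{\gamma^2 t}{4}}e^{\frac{\gamma}{2}B_t}\right)^{-1},\]
with $\underline{C}^B(N)$ defined analogously using the infimum. These constants are calibrated so that the resulting sup/inf sandwich measures $\overline{\mu}^B_{k,N}, \underline{\mu}^B_{k,N}$ have the same conditional expectation as $\mu^B_{2^{-k/N}}$ given the coarse-scale semicircle averages, hence both converge a.s. to $\mu^B$ by the previous step. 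Since $\overline{C}^B(N), \underline{C}^B(N) \to 1$ as $N \to \infty$ by monotone convergence, sandwiching $\mu^B_\epsilon$ between them yields the desired a.s. convergence.

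The main obstacle is the truncation in the regime $\gamma \in [\sqrt{2}, 2)$: the doubled boundary variance shifts both the Girsanov-tilted conditional mean of $h_{2^{-k-1}}(z)$ under the exponentially weighted measure (to $\frac{\gamma}{2}\sigma^2$) and the natural truncation threshold, and the one-dimensional point count $|S_k^y| = 2^k$ alters the arithmetic of the decay exponents. Verifying that every exponent remains positive for a suitable choice of $\alpha$ is tedious but conceptually parallel to the computations in Lemma \ref{lemma::Nconvergence}, and introduces no fundamentally new ideas.
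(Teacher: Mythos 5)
Your proposal is correct and follows essentially the same route as the paper: reduce free to mixed via Remark~\ref{remark::freeboundary}, establish geometric convergence at scales $2^{-k/N}$ by the second-moment argument (with the truncation for $\gamma\in[\sqrt{2},2)$), and then interpolate using a sup/inf sandwich with constants calibrated against Brownian motion (your time-changed $B_t$ of variance $2t$ over $[0,\log 2 / N]$ is equivalent to the paper's standard Brownian motion run over $[0,2\log 2 / N]$). Your note that the truncation threshold range shifts to $\alpha\in(\gamma/2,\gamma)$ and that the exponent $1-\gamma^2/2+(\gamma-\alpha)^2$ is positive for $\alpha$ near $\gamma/2$ when $\gamma<2$ correctly fills in a detail the paper only alludes to, and your observation that $h_2$ is harmonic across $\underline{\partial D}$ (so its semicircle averages converge uniformly, in fact equal $h_2$ exactly by the mean-value property and evenness) makes the free-boundary reduction explicit.
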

\begin{proof}
We first consider the mixed boundary conditions. Fix $\gamma\in[0,2)$. Without loss of generality, we assume that $I=[0,1]\subsetneqq\underline{\partial D}$ and it suffices to show that the random variables $\mu_{\epsilon}^{B}(I)$ a.s. converge. For $y\in(0,1)$ and $k\ge 1$, let $S_{k}^{y}$ be the discrete set of $2^{k}$ points $a\in I$ with the property that $2^{k}a-2^{k}y\in\mathbb{Z}$. Define
\[A_{k}^{y}:=2^{-k}\sum_{z\in S_{k}^{y}}e^{\bar{h}_{2^{-k-1}}(z)}, \quad B_{k}^{y}:=2^{-k}\sum_{z\in S_{k}^{y}}e^{\bar{h}_{2^{-k-2}}(z)}.\]
Assume that $k$ is large enough such that all the semicircles considered are contained in $D$. We will give an estimate of $\mathbb{E}\left|A_{k}^{y}-B_{k}^{y}\right|^{2}$. With (\ref{equation::mix}) and (\ref{equation::mixcov}) in place of (\ref{eqaution::samecenter}) and (\ref{equation::variance}), 
%we know that conditioned on the values of $h_{2^{-k-1}}(z)$ for $z\in S_{k}^{y}$, the random variables %$h_{2^{-k-2}}(z)$ are independent of one another and each is a Gaussian random variable with mean $h_{2^{-k-1}}(z)$ %and variance $2\log 2$. 
a similar argument as in the proof of Lemma \ref{lemma::Nconvergence} implies that given the values of $h_{2^{-k-1}}(z)$ for $z\in S_{k}^{y}$, the conditional expectation of $\left|A_{k}^{y}-B_{k}^{y}\right|^{2}$ is
\[\mathbb{E}\left.\left(\left|A_{k}^{y}-B_{k}^{y}\right|^{2}\right|h_{2^{-k-1}}(z), z\in S_{k}^{y}\right)=2^{-2k}\tilde{C}\sum_{z\in S_{k}^{y}}e^{2\bar{h}_{2^{-k-1}}(z)},\]
where 
\[\tilde{C}=\mathbb{E}\left(\left.\left|1-2^{-\frac{\gamma^{2}}{4}}\exp\left[\frac{\gamma}{2} h_{2^{-k-2}}(z)\right]\right|^{2}\right|h_{2^{-k-1}}(z)=0\right)=e^{\frac{\gamma^{2}}{2}}-1\]
is a constant independent of $k$ and $z$.

Therefore, the unconditional expectation is 
\[\mathbb{E}\left|A_{k}^{y}-B_{k}^{y}\right|^{2}=\tilde{C}2^{-2k}2^{(k+1)\frac{\gamma^{2}}{2}}\sum_{z\in S_{k}^{y}}e^{-\frac{\gamma^{2}}{2}\tilde{G}_{z}(z)}\asymp 2^{-k(1-\frac{\gamma^{2}}{2})},\]
where the constant in $\asymp$ is independent of $k$ and $y$.
%Note that $\mu^{B}_{2^{-k-1}}(I)$ is the mean value of $A_{k}^{y}$ over $y\in[0,1]$, and $\mu^{B}_{2^{-k-2}}(I)$ is %the mean value of $B_{k}^{y}$ over $y\in[0,1]$. 
By Jensen's inequality and the Borel-Cantelli lemma, we have that as $\epsilon\rightarrow 0$ along negative powers of $2$, the random variables $\mu_{\epsilon}(I)$ a.s.\ converge.
%when $0\le\gamma^{2}<2$, $\mathbb{E}|\mu^{B}_{2^{-k-1}}(I)-\mu^{B}_{2^{-k-2}}(I)|^{2}$ decays exponentially in $k$. 

The case for $\sqrt{2}\le\gamma<2$ can be proved using an argument similar to the proof of Lemma \ref{lemma::Nconvergence} by breaking the sum over $z\in S_{k}^{y}$ into two parts. Therefore, we are able to define the quantum boundary measure $\mu^{B}$ as the almost sure limit of $\mu^{B}_{2^{-k}}$ as $k\rightarrow\infty$.

Using the above setup and the same argument as that in the proof of Lemma \ref{lemma::Nconvergence}, it is easy to verify that for each integer $N\ge 1$, the random measures $\mu^{B}_{2^{-k/N}}$ a.s.\ converge to $\mu^{B}$ as $k\rightarrow\infty$.

In \cite[Section 6.2]{SheffieldGFFMath}, it was shown that given a reference radius $\epsilon_{0}$, for $0<\epsilon<\epsilon_{0}$, the Gaussian random variables $h_{\epsilon}(z)-h_{\epsilon_{0}}(z)$ is a standard Brownian motion $B_{t}$ independent of $h_{\epsilon_{0}}(z)$, where $t=-2\log(\epsilon/\epsilon_{0})$ . For each positive integer $N$, let 
\[\overline{C}(N):=\left(\mathbb{E}\sup_{t\in[0,\frac{2\log 2}{N}]}e^{-\frac{\gamma^{2}t}{8}}e^{\frac{\gamma}{2} B_{t}}\right)^{-1},\]
and
\[\underline{C}(N):=\left(\mathbb{E}\inf_{t\in[0,\frac{2\log 2}{N}]}e^{-\frac{\gamma^{2}t}{8}}e^{\frac{\gamma}{2} B_{t}}\right)^{-1}.\]
By considering the random measures 
\[\overline{C}(N)\sup_{\epsilon\in[2^{-(k+1)/N}, 2^{-k/N}]} e^{\bar{h}_{\epsilon}(z)}dz\quad\textrm{and}\quad\underline{C}(N)\inf_{\epsilon\in[2^{-(k+1)/N}, 2^{-k/N}]} e^{\bar{h}_{\epsilon}(z)}dz\]
as in the proof of Theorem \ref{theorem::measureconvergence}, we are able to show that as $\epsilon\rightarrow 0$, the random measures $\mu^{B}_{\epsilon}$ converge to $\mu^{B}$, which completes the proof for the mixed boundary GFF.

The result for the free boundary GFF follows from Remark \ref{remark::freeboundary} and the result of the mixed boundary GFF.
\end{proof}
An analogous transformation rule  to (\ref{equation::transformrule}) also holds for the quantum boundary measures:
\begin{thm}
Fix $\gamma\in[0,2)$ and define $D,\underline{\partial D},h,\mu^{B}$ as above with either free or mixed boundary conditions. Let $\Lambda$ be the collection of the conformal maps from bounded domains onto $D$ such that $\phi^{-1}(\underline{\partial D})$ is a line segment of the boundary on the real axis and $(\phi^{-1})'|_{\underline{\partial D}}>0$ (see Figure \ref{figure::map}). Then it is almost surely the case that for all $\phi\in\Lambda$, the measures $\mu^{B}_{\tilde{h}_{\phi}}$ with $\tilde{h}_{\phi}$ as defined in (\ref{equation::tildeh}) are well-defined. The transformation rule---
$\mu^{B}_{h}$ on $\underline{\partial D}$ is the image under $\phi$ of $\mu^{B}_{\tilde{h}_{\phi}}$ on $\phi^{-1}(\underline{\partial D})$---
almost surely holds simultaneously for all $\phi\in\Lambda$.
\end{thm}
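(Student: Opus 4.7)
The plan is to mirror the proof of Theorem \ref{thm::conformalvariance} line by line, replacing circle averages by semicircle averages and dyadic squares in $D$ by dyadic sub-intervals of $\underline{\partial D}$. First I would establish the boundary analogues of Theorem \ref{prop::convergence} and Corollary \ref{lemma::differentradius}: using a radially symmetric bump $f$ on $B_{1}(0)$ and its upper-halfplane restriction $f^{+}(z):=2f(z)1_{\{\textrm{Im}\,z>0\}}$, with $f^{+}_{\epsilon}(z):=\epsilon^{-2}f^{+}(z/\epsilon)$, one shows by the second-moment/conditional-expectation argument of Lemma \ref{lemma::Nconvergence}, but with the covariance formulas (\ref{equation::mix})--(\ref{equation::mixcov}) in place of (\ref{eqaution::samecenter})--(\ref{equation::variance}), that the mollified boundary measures
\[
\exp\!\Bigl(\tfrac{\gamma}{2}(h*f^{+}_{\epsilon})(z)+\tfrac{\gamma^{2}}{4}(\log\epsilon+C')\Bigr)\,dz
\]
a.s.\ converge weakly on $\underline{\partial D}$ to $\mu^{B}$, and that the same holds if $\epsilon$ is replaced by $\epsilon\,g(z)$ for a smooth positive $g:\underline{\partial D}\to\mathbb{R}_{>0}$. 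Here $C'$ is the boundary counterpart of the constant $C$ in (\ref{equation::defC}). The free boundary case reduces to the mixed case via Remark \ref{remark::freeboundary}.

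Next, for $\phi\in\Lambda$, applying this boundary analogue of Corollary \ref{lemma::differentradius} with $g(\omega)=1/|\phi'(\omega)|$ realises $\mu^{B}_{\tilde h_{\phi}}$ as the a.s.\ weak limit of
\[
\Bigl(\tfrac{e^{C'}\epsilon}{|\phi'(\omega)|}\Bigr)^{\!\gamma^{2}/4}\exp\!\Bigl[\tfrac{\gamma}{2}(h\circ\phi+Q\log|\phi'|)*f^{+}_{\epsilon/|\phi'(\omega)|}(\omega)\Bigr]\,d\omega.
\]
A one-dimensional change of coordinates along $\underline{\partial D}$ (whose Jacobian is $|\phi'|$ rather than $|\phi'|^{2}$; the same exponent $Q=2/\gamma+\gamma/2$ still appears because $\gamma Q/2-\gamma^{2}/4-1=0$, where the $\gamma^{2}/4$ replaces the bulk $\gamma^{2}/2$) reduces the claim to showing that for every dyadic sub-interval $I\subset\underline{\partial D}$,
\[
(e^{C'}\epsilon)^{\gamma^{2}/4}\Bigl|\sup_{\phi\in\Lambda}\int_{I}e^{\frac{\gamma}{2}(h\circ\phi)*f^{+}_{\epsilon|(\phi^{-1})'(z)|}(\phi^{-1}(z))}\,dz\;-\;\int_{I}e^{\frac{\gamma}{2}h*f^{+}_{\epsilon}(z)}\,dz\Bigr|\longrightarrow 0
\]
a.s.\ along $\epsilon=2^{-k}$, together with the analogous statement for $\inf_{\phi\in\Lambda}$.

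From here the argument proceeds exactly as in Section \ref{section::transform}. Subdivide $I$ into $2^{k}$ sub-intervals of length $2^{-k}$ and, using the boundary Markov property (obtained by applying Proposition \ref{proposition::markov} to the zero boundary GFF on the doubled domain $D^{\dagger}$ of Figure \ref{figure::symmetry}), condition on the semicircle averages $h_{2^{-k-1}}(z)$ at the base points, as in (\ref{equation::term}). The conditional second-moment computation reduces, at a single normalised base point $z=0\in\underline{\partial D}$ with $\phi^{-1}(0)=0$ and $(\phi^{-1})'(0)=1$, to controlling the Gaussian extremum $m_{\epsilon}:=\mathbb{E}\sup_{\phi\in\Lambda^{*}}(h^{*},f^{+,\phi}_{\epsilon})$, where $h^{*}$ is a GFF on a bounded neighbourhood of $0$ straddling the real axis and $f^{+,\phi}_{\epsilon}$ is the distorted bump built from $f^{+}$ as in (\ref{equation::fdef}). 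The positivity assumption $(\phi^{-1})'|_{\underline{\partial D}}>0$ combined with the Schwarz reflection principle implies that each such normalised $\phi^{-1}$ has \emph{real} Taylor coefficients at $0$, so the de Branges estimates (\ref{equation::estphi})--(\ref{equation::estdiffphi}) of Lemma \ref{lemma::unicon} go through verbatim and the support of $f^{+,\phi}_{\epsilon}$ remains in the upper half of $B_{4}(0)$; Lemma \ref{lemma::est} then delivers the $L^{1}$ and $L^{2}$ bounds required to re-run (\ref{equation::L1est})--(\ref{equation::L2est}).

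The main obstacle I anticipate is the joint handling of the boundary Markov decomposition and the real-analytic symmetry of $\phi^{-1}$ across $\underline{\partial D}$. The cleanest route is to work throughout on the double $D^{\dagger}=D\cup\bar D$: realise the mixed boundary GFF as the even part of a zero boundary GFF on $D^{\dagger}$, extend each $\phi\in\Lambda$ to a conformal self-map of the double by Schwarz reflection (well defined precisely because $\phi^{-1}$ sends a real segment to a real segment with positive derivative), and apply Proposition \ref{proposition::markov} together with the de Branges bookkeeping of Section \ref{section::transform} directly to symmetrised test functions of the form $f^{+,\phi}_{\epsilon}(\,\cdot\,)+f^{+,\phi}_{\epsilon}(\overline{\,\cdot\,})$. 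The free boundary case is then reduced to the mixed case via Remark \ref{remark::freeboundary}, noting that the additional harmonic piece $h_{2}$ pulls back smoothly under $\phi$ and hence does not affect the limit of the rescaled mollified boundary measures.
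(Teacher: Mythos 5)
Your proposal is essentially the paper's own proof. The paper works with the reflected extension $h^\dagger$ on the double $D^\dagger=D\cup\bar D$ and convolves with the full radially symmetric bump $f_\epsilon$, whereas you begin with a half-bump $f^+$ on $D$ itself; but you recognize in your final paragraph that the cleanest bookkeeping is precisely to pass to $D^\dagger$, extend $\phi^{-1}$ by Schwarz reflection (which is what the positivity hypothesis $(\phi^{-1})'|_{\underline{\partial D}}>0$ licenses, and what makes the Taylor coefficients real so that the de~Branges estimates in Lemma~\ref{lemma::unicon} go through unchanged), and test $h^\dagger$ against symmetrized bumps --- exactly the route taken in the paper. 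Your exponent check $\gamma Q/2-\gamma^2/4-1=0$, the use of (\ref{equation::mix})--(\ref{equation::mixcov}) in place of the bulk covariance identities, the Markov conditioning on $h_{2^{-k-1}}(z)$, the reuse of Lemmas~\ref{lemma::unicon}--\ref{lemma::est} to get the sup/inf estimates analogous to (\ref{equation::sup})--(\ref{equation::inf}), and the reduction of the free-boundary case to the mixed case via Remark~\ref{remark::freeboundary} all match the paper.
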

\begin{figure}[!htbp]
\begin{center}
\includegraphics{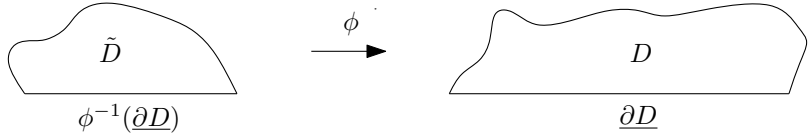}
\end{center}
\caption{\label{figure::map}\small{$\phi$ is a conformal map from $\tilde{D}$ to $D$. We assume that $\phi^{-1}(\underline{\partial D})\subset\partial\tilde{D}\cap\mathbb{R}$ is a line segment and $(\phi^{-1})'|_{\underline{\partial D}}>0$.}}
\end{figure}
\begin{proof}
We first consider the mixed boundary conditions. Recall the construction of $h$ (see the caption of Figure \ref{figure::symmetry}). By a reflection principle, we are able to extend $h$ on $D$ to $h^{\dag}$ on $D^{\dag}=D\cup\bar{D}$. \cite[Proposition 2.7]{SheffieldGFFMath} implies that $h^{\dag}$ is almost surely a continuous linear functional on the space $H_{s}(D^{\dag})\cap H_{e}(D^{\dag})$ of smooth functions which are compactly supported on $D^{\dag}$ and even with respect to the real axis. For $\rho_{1},\rho_{2}\in H_{s}(D^{\dag})\cap H_{e}(D^{\dag})$, we have
\[\mathrm{cov}((h^{\dag},\rho_{1}), (h^{\dag},\rho_{2}))=2\iint_{D^{\dag}\times D^{\dag}}G(x,y)\rho_{1}(x)\rho_{2}(y)dxdy,\]
where $G(x,y)$ is the Green's function for the Laplacian on $D^{\dag}$ as defined in (\ref{equation::green}). The conformal invariance of the Green's function implies that $h^{\dag}$ is also conformal invariant. We call $h^{\dag}$ the reflected extension of the mixed boundary GFF $h$ on $D$. We also observe that $h^{\dag}$ has the following Markov property. Suppose that $z\in\underline{\partial D}, r>0$ and $B_{r}(z)\subset D^{\dag}$. Given the values of $h^{\dag}$ outside of $B_{r}(z)$, the conditional law of $h^{\dag}$ on $B_{r}(z)$ is given by the sum of the harmonic extension of the values of $h^{\dag}$ on $\partial B_{r}(z)$ and  the reflected extension of an independent mixed boundary GFF on the semi-disk centered at $z$ with radius $r$ contained in $D$.
 
To prove the transformation rule, we will make use of the convolution techniques in Section \ref{section::bump}. For each mollifier $f_{\epsilon}$ given in (\ref{equation::bump}) and $z\in\underline{\partial D}$, we have that
\[h^{\dag}*f_{\epsilon}(z)=(h^{\dag}, f_{\epsilon}(z-\cdot))=2\pi\int_{0}^{\epsilon}h_{r}(z)f_{\epsilon}(r)dr.\]
Let
\[\tilde{h}_{\epsilon}(z):=\frac{\gamma}{2}h^{\dag}*f_{\epsilon}(z)+\frac{\gamma^{2}}{4}(\log\epsilon+C),\]
where $C$ is as defined in (\ref{equation::defC}). Then a similar argument to that in the proof of Theorem \ref{prop::convergence} implies that the random measures $\tilde{\mu}_{\epsilon}^{B}:=e^{\tilde{h}_{\epsilon}(z)}dz$ a.s.\ converge to $\mu^{B}$ as $\epsilon\rightarrow 0$. 

For each $\phi\in\Lambda$, we are able to extend $\phi^{-1}$ to a conformal map on $D^{\dag}$ by setting $\phi^{-1}(\bar{x})=\overline{\phi^{-1}(x)}$ for $x\in D$. It is easy to verify that an analogous result to Corollary \ref{lemma::differentradius} holds: the random measures $e^{\tilde{h}_{\epsilon/(\phi^{-1})'(z)}}dz$ converge to $\mu^{B}$ as $\epsilon\rightarrow 0$. Combining this fact with change of coordinates as in the proof of Theorem \ref{thm::conformalvariance}, we have that in order to obtain the desired result, it suffices to show that as $\epsilon\rightarrow 0$ along negative powers of 2, it is the case that
\begin{equation}\label{equation::bouest1}
(e^{C}\epsilon)^{\frac{\gamma^{2}}{4}}\left|\sup_{\phi\in\Lambda}\int_{I}\exp\left[\frac{\gamma}{2}(h^{\dag}\circ\phi)*f_{\epsilon(\phi^{-1})'(z)}(\phi^{-1}(z))\right]-\exp\left[\frac{\gamma}{2} h^{\dag}*f_{\epsilon}(z)\right]dz\right|\rightarrow 0,
\end{equation}
and
\begin{equation}\label{equation::bouest2}
(e^{C}\epsilon)^{\frac{\gamma^{2}}{4}}\left|\inf_{\phi\in\Lambda}\int_{I}\exp\left[\frac{\gamma}{2}(h^{\dag}\circ\phi)*f_{\epsilon(\phi^{-1})'(z)}(\phi^{-1}(z))\right]-\exp\left[\frac{\gamma}{2} h^{\dag}*f_{\epsilon}(z)\right]dz\right|\rightarrow 0
\end{equation}
almost surely, where we assume that $I=[0,1]\subsetneqq\underline{\partial D}$.

Let $\Lambda^{*}:=\left\{\phi\in\Lambda: \phi^{-1}(0)=0, (\phi^{-1})'(0)=1\right\}$ and $h^{*}$ be the reflected extension of a mixed boundary GFF on the semi-disk centered at $0$ with radius $4$. For $\phi\in\Lambda^{*}$, define $f^{\phi}_{\epsilon}$ as in (\ref{equation::fdef}). It is clear that $f_{\epsilon}, f^{\phi}_{\epsilon}\in H_{s}(B_{4}(0))\cap H_{e}(B_{4}(0))$. Since $h^{*}$ is almost surely a continuous linear functional on $H_{s}(B_{4}(0))\cap H_{e}(B_{4}(0))$, Lemma \ref{lemma::unicon} and the argument in the proof of Lemma \ref{lemma::est} imply that for any $\alpha>0$, we have
\begin{equation}\label{equation::mixest}
0\le\mathbb{E}e^{\alpha\left(\sup_{\phi\in\Lambda^{*}}(h^{*}, f_{\epsilon}^{\phi})-(h^{*},f)\right)}-1\lesssim g(\epsilon),
\end{equation}
where $g$ is a function such that $g(\epsilon)\rightarrow 0$ as $\epsilon\rightarrow 0$.

Applying the setup in the beginning of the proof of Theorem \ref{theorem::boundaryest} and the Markov property of $h^{\dag}$, we see that the proofs of (\ref{equation::bouest1}) and (\ref{equation::bouest2}) proceed exactly the same as the proofs of (\ref{equation::sup}) and (\ref{equation::inf}) with (\ref{equation::mixest}) in place of (\ref{equation::est}). This completes the proof of the mixed boundary GFF.

The result for the free boundary GFF follows from Remark \ref{remark::freeboundary} and the case of the mixed boundary GFF.
\end{proof}

\bibliographystyle{alpha}
\bibliography{commutativity_LQG}
\end{document}